\title[Generating Quadrangulations and Operations on Maps]{Generating Plane Quadrangulations and Symmetry-preserving Operations on Maps}
\author{Heidi Van den Camp\affiliationmark{1} \and Brendan D. McKay\affiliationmark{2}}
\affiliation{
	Ghent University, Ghent, Belgium\\
	Australian National University, Canberra, Australia}
\keywords{quadrangulation, generation, symmetry-preserving operation, map}
\newtheorem{theorem}{Theorem}
\newtheorem*{theorem*}{Theorem}
\newtheorem{definition}{Definition}
\newtheorem{lemma}{Lemma}
\newtheorem*{corollary*}{Corollary}
\newtheorem{remark}{Remark}
\newtheorem*{remark*}{Remark}
\newcommand*{\vpointer}[1]{\qquad\quad\vcenter{\hbox{\scalebox{1.5}{$\xlongrightarrow{#1}$}}}\qquad\qquad}
\begin{document}
	
\publicationdata{vol. 26:3}{2024}{9}{10.46298/dmtcs.13080}{2024-02-19; 2024-02-19; 2024-09-09}{2024-09-27}
\maketitle
\begin{abstract}
  ~\\
  Lopsp-operations are operations on maps that are applied locally and are guaranteed to preserve all the orientation-preserving symmetries of maps. Well-known examples of such operations are dual, ambo, truncation, and leapfrog. They are described by plane 3-coloured triangulations with specific properties. We developed and implemented a program that can generate all lopsp-operations of a given size by reducing the problem of generating lopsp-operations to generating all plane quadrangulations that are not necessarily simple. We extended the program plantri to generate these quadrangulations. 
\end{abstract}

\section{Introduction}\label{sec:intro}

Symmetry-preserving operations on polyhedra have been studied for centuries. The operations dual and truncation are probably the most well known, but there are many others, such as ambo, chamfer and leapfrog. These operations are often very intuitive, but they were each described in their own way. For example, truncation can be described as cutting off every vertex of a polyhedron, so that every vertex is replaced by a face. Ambo is similar, but there the cut goes further, so that the new faces share vertices but no edges. Chamfer works differently. It keeps the vertices, but replaces the edges by hexagons. 

Local orientation-preserving symmetry-preserving operations, or lopsp-operations, were first introduced in the supplementary material of \cite{lopsp2017} and were further studied in \cite{lopsp2021,PhDPieter, lopsp_Pieter2021,lopsp2023}. Lopsp-operations describe a class of operations on maps -- i.e.\ embedded graphs -- that preserve all the orientation-preserving symmetries of that map, and they are applied locally. The well-known operations such as dual and truncation can all be described as lopsp-operations. The advantages of this general definition are that it makes it possible to describe every operation in the same way, and that results can be proved for all lopsp-operations instead of only considering some well-known operations separately. 

We have developed and implemented an algorithm to generate all lopsp-operations of a given size.
The first step in generating lopsp-operations is generating plane quadrangulations, allowing parallel edges. We wrote this generator for plane quadrangulations as an extension of the program plantri (\cite{plantri_program,plantri_article}). Plantri was already able to generate all simple plane quadrangulations, as described in \cite{simplequads}. Our algorithm uses the same approach as the one for simple plane quadrangulations, but with an extra extension operation. 

There is a nice correspondence between plane quadrangulations and general plane maps that can be described by the lopsp-operation join. This can be used to generate all plane maps of a given size, allowing loops and parallel edges.

Starting from quadrangulations, it is quite straightforward to generate all lopsp-operations. However, not all lopsp-operations are equally interesting. With every lopsp-operation one can associate a tiling of the plane in a natural way. A lopsp-operation is $\textbf{c}k$ if its associated tiling is $k$-connected and every face has size at least $k$. The most interesting lopsp-operations are the $\textbf{c}3$ lopsp-operations. In fact, these were the only operations included in the original definition from \cite{lopsp2017}. Therefore we have added an option to our program to generate only $\textbf{c}2$ or $\textbf{c}3$ operations. To check the connectivity of a lopsp-operation we use Theorem \ref{thm:2conn_char} and Theorem \ref{thm:3conn_char} to recognise them based on their associated quadrangulation. We also added some filters sooner in the generation process, so that not all quadrangulations have to be generated if only $\textbf{c}2$ or $\textbf{c}3$ lopsp-operations are required.

Some lopsp-operations do not only preserve all orientation-preserving symmetries of a map but also  all orientation-reversing symmetries. These operations, that preserve all symmetries of a map, can be described in a different way that requires less information: They can be described as local symmetry-preserving or lsp-operations. Just like lopsp-operations, they were first defined in \cite{lopsp2017}. We will show how we can recognise if a lopsp-operation can be written as an lsp-operation by looking at its automorphism group. A generator for lsp-operations already exists, see \cite{lspcounts}, which we used to test our generator of lopsp-operations.

This text will be structured as follows: In Section \ref{sec:definitions} we define some important terms such as lopsp-operation and predecoration. Then in Section \ref{sec:quads} we discuss the algorithm for generating quadrangulations, how we tested it, and the connection to the generation of plane maps. In Section \ref{sec:lopsp}, it is explained how to generate lopsp-operations from quadrangulations. We also describe how to recognise if a lopsp-operation is $\textbf{c}2$ or $\textbf{c}3$ and if it is also an lsp-operation. Finally, we give our obtained counts of the number of lopsp-operations of different sizes and with different properties.

\section{Definitions}\label{sec:definitions}

A \emph{map}, also known as embedded graph or 2-cell embedding, is an embedding of a (multi)graph $G$ into an orientable surface $\Sigma$ such that $\Sigma \setminus G$ is the union of a set of open discs, which are the \emph{faces} of the map. It is possible to define maps on surfaces that are not orientable, but in this text, we will always assume that $\Sigma$ is orientable. Combinatorially, a map can be described as a multigraph $G$ together with a function $\sigma$ that maps oriented edges to oriented edges with the same start, such that for each vertex $\sigma$ imposes a cyclic order on the oriented edges starting in that vertex. For a map $M$, let $V_M$ and $E_M$ denote the sets of vertices and edges of $M$ respectively. Let $E'_M$ denote the set of directed edges. We consider two directed edges for each edge of the map, even for loops, i.e.\ edges whose two endpoints are the same vertex. This implies that $|E'_M|=2\cdot|E_M|$. Let $\theta$ be the involution mapping a directed edge to its inverse. The orbit of a directed edge under $\theta\sigma$ is a \emph{face}. The number of directed edges in a face is the \emph{size} of the face. This definition of a face corresponds to the topological notion of face, which is a component of $\Sigma \setminus G$. Every face is a closed walk that is exactly the walk around one of those components. Let $F_M$ denote the set of faces of a map. The \emph{genus} of $M$ is the genus of the orientable surface $\Sigma$ and is equal to $\frac{2-|V_M| + |E_M| -|F_M|}{2}$. A map is \emph{plane} if it has genus 0, i.e.\ it is embedded on a sphere.
A map is a \emph{triangulation} if every face has size 3 and a \emph{quadrangulation} if every face has size 4. A map $M'$ is a \emph{submap} of a map $M$ if its vertices and edges are all in $M$, and its embedding is induced by the embedding of $M$. 

To avoid all confusion about what is meant by an automorphism of maps in this text, we give formal definitions of the different types of automorphisms.

\begin{definition}
	An \emph{orientation-preserving automorphism} of a map $M$ is a bijection $\phi: E'_M \rightarrow E'_M$ such that $\theta\circ\phi = \phi\circ\theta$ and $\sigma\circ\phi = \phi\circ\sigma$. 
	
	An \emph{orientation-reversing automorphism} of a map $M$ is a bijection $\phi: E'_M \rightarrow E'_M$ such that $\theta\circ\phi = \phi\circ\theta$ and $\sigma^{-1}\circ\phi = \phi\circ\sigma$.
	
	An \emph{automorphism} is a bijection that is either an orientation-preserving or an orientation-reversing automorphism. 
	
	The \emph{automorphism group} $Aut(M)$ or symmetry group of a map $M$ is the group consisting of all automorphisms of $M$ with composition as the operation.
	
	The \emph{orientation-preserving automorphism group} $Aut_{OP}(M)$ of a map $M$ is the group consisting of all orientation-preserving automorphisms with composition as the operation.
\end{definition}

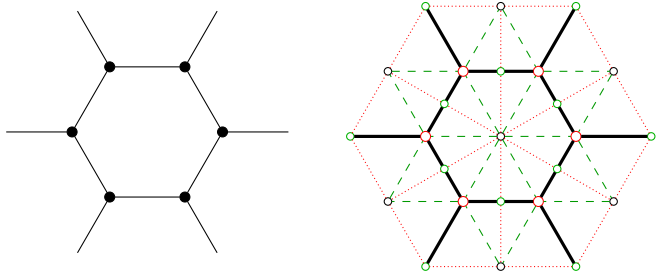
\begin{figure}
	\centering
	\begin{tikzpicture}
\tikzset{normal/.style={shape=circle, fill=black, draw=black, scale=0.4}}
\tikzset{type2/.style={shape=circle, draw=red, scale=0.5}}
\tikzset{noNode/.style={draw=none}}

\node[normal] (v1) at (-1,0) {};
\node[normal] (v4) at (1,0) {};
\node[normal] (v6) at ({cos(120)}, {-sin(120)}) {};
\node[normal] (v3) at ({cos(60)}, {sin(60)}) {};
\node[normal] (v5) at ({cos(60)}, {-sin(60)}) {};
\node[normal] (v2) at ({cos(120)}, {sin(120)}) {};
\draw (v1) -- (v2) -- (v3) -- (v4) -- (v5) -- (v6) -- (v1);
\node [noNode] (v7) at (-2, 0) {};
\node [noNode] (v8) at ({2*cos(120)}, {2*sin(120)}) {};
\node [noNode] (v9) at ({2*cos(60)}, {2*sin(60)}) {};
\node [noNode] (v10) at (2, 0) {};
\node [noNode] (v11) at ({2*cos(60)}, {-2*sin(60)}) {};
\node [noNode] (v12) at ({2*cos(120)}, {-2*sin(120)}) {};
\draw  (v1) edge (v7);
\draw  (v2) edge (v8);
\draw (v3) edge (v9);
\draw (v4) edge (v10);
\draw (v5) edge (v11);
\draw (v6) edge (v12);
\end{tikzpicture}
	\quad
	\begin{tikzpicture}[scale=1]
\tikzset{normal/.style={shape=circle, draw=red, scale=0.4}}
\tikzset{type1/.style={shape=circle, draw=black!30!green, scale=0.3}}
\tikzset{type2/.style={shape=circle, draw=black, scale=0.3}}
\tikzset{noNode/.style={draw=none}}
\tikzset{0edge/.style={draw=red, densely dotted}}
\tikzset{1edge/.style={draw=black!40!green, dashed}}
\tikzset{2edge/.style={very thick}}

\node[normal] (v1) at (-1,0) {};
\node[normal] (v4) at (1,0) {};
\node[normal] (v6) at ({cos(120)}, {-sin(120)}) {};
\node[normal] (v3) at ({cos(60)}, {sin(60)}) {};
\node[normal] (v5) at ({cos(60)}, {-sin(60)}) {};
\node[normal] (v2) at ({cos(120)}, {sin(120)}) {};

\node[type1] (e1) at ({(cos(120)-1)/2}, {sin(60)/2}) {};
\node[type2] (f1) at ({(cos(120)-1)}, {sin(60)}) {};
\node[type1] (e2) at (0, {sin(60)}) {};
\node[type2] (f2) at (0, {2*sin(60)}) {};
\node[type1] (e3) at({(1 - cos(120))/2}, {sin(60)/2}) {};
\node[type2] (f3) at ({(-cos(120)+1)}, {sin(60)}) {};
\node[type1] (e4) at({(1 - cos(120))/2}, {-sin(60)/2}) {};
\node[type2] (f4) at ({(-cos(120)+1)}, {-sin(60)}) {};
\node[type1] (e5) at({(0}, {-sin(60)}) {};
\node[type2] (f5) at (0, {-2*sin(60)}) {};
\node[type1] (e6) at ({(cos(120)-1)/2}, {-sin(60)/2}) {};
\node[type2] (f6) at ({(cos(120)-1)}, {-sin(60)}) {};

\node[type2] (f) at (0, 0) {};

\node [type1] (v7) at (-2, 0) {};
\node [type1] (v8) at ({2*cos(120)}, {2*sin(120)}) {};
\node [type1] (v9) at ({2*cos(60)}, {2*sin(60)}) {};
\node [type1] (v10) at (2, 0) {};
\node [type1] (v11) at ({2*cos(60)}, {-2*sin(60)}) {};
\node [type1] (v12) at ({2*cos(120)}, {-2*sin(120)}) {};

\begin{scope}[0edge]
\draw (f) -- (e1)--(f1);
\draw (f) -- (e2)--(f2);
\draw (f) -- (e3)--(f3);
\draw (f) -- (e4)--(f4);
\draw (f) -- (e5)--(f5);
\draw (f) -- (e6)--(f6);
\draw (f1)--(v8)--(f2)--(v9)--(f3)--(v10)--(f4)--(v11)--(f5)--(v12)--(f6)--(v7)--(f1);
\end{scope}

\begin{scope}[1edge]
\draw (f) edge (v1);
\draw (f) edge (v2);
\draw (f) edge (v3);
\draw (f) edge (v4);
\draw (f) edge (v5);
\draw (f) edge (v6);
\draw (v1)--(f1)--(v2)--(f2)--(v3)--(f3)--(v4)--(f4)--(v5)--(f5)--(v6)--(f6)--(v1);
\end{scope}

\begin{scope}[2edge]
\draw (v1) -- (e1) -- (v2) -- (e2) -- (v3) -- (e3) -- (v4) -- (e4) -- (v5) -- (e5) -- (v6) -- (e6) -- (v1);
\draw  (v1) edge (v7);
\draw  (v2) edge (v8);
\draw (v3) edge (v9);
\draw (v4) edge (v10);
\draw (v5) edge (v11);
\draw (v6) edge (v12);
\end{scope}

\end{tikzpicture}
	\caption{A face of an embedded graph $G$ and the corresponding part of $B_G$. }
	\label{fig:barycentric}
\end{figure}

The barycentric subdivision $B_M$ of a map $M$ is a map with vertex set $V_M\cup E_M \cup F_M$. The vertices corresponding to elements of $V_M$, $E_M$ and $F_M$ are coloured with colours 0, 1 and 2 respectively. There are no edges between vertices of the same colour, and two vertices of $B_M$ are adjacent if the corresponding vertices, edges or faces are incident in $M$. The embedding is as shown in Figure \ref{fig:barycentric}. In all figures in this text, colours 0, 1 and 2 are represented by red, green and black respectively. The map $B_M$ is a properly 3-coloured triangulation and it has the same genus as $M$. The faces of $B_M$ have one vertex of each colour, and are referred to as \emph{chambers} or \emph{flags}.

We use the definition of lopsp-operations from \cite{lopsp2021}. It is slightly different from those in \cite{lopsp2017} and \cite{lopsp_Pieter2021}, but it is more general.

\begin{definition}\label{def:lopsp}
	Let $O$ be a plane map, together with a colouring $c: V_O \rightarrow \{0,1,2\}$ and three special vertices marked as $v_0$, $v_1$, and $v_2$. We say that a vertex has colour $i$ if $c(v)=i$. 
	This map $O$ is a \emph{local orientation-preserving symmetry-preserving operation}, lopsp-operation for short, if the following properties hold:
	\begin{enumerate}[(1)]
		\item $O$ is a triangulation.
		\item There are no edges between vertices of the same colour.
		\item We have
		\[c(v_0),c(v_2)\neq 1\]
		\[c(v_1)=1 \Rightarrow deg(v_1)=2\]
		and for each vertex $v$ different from $v_0$, $v_1$, and $v_2$:
		\[c(v)=1 \Rightarrow deg(v)=4\]
	\end{enumerate}
\end{definition}

\begin{figure}
	\centering
	
	\scalebox{1}{\begin{tikzpicture}[scale=1.5]
\tikzset{type0/.style={shape=circle, draw=red, scale=0.4, fill=white}}
\tikzset{type1/.style={shape=circle, draw=black!30!green, scale=0.4, fill=white}}
\tikzset{type2/.style={shape=circle, draw=black, scale=0.4, fill=white}}
\tikzset{t0/.style={draw=red, densely dotted}}
\tikzset{t1/.style={draw=black!40!green, dashed}}
\tikzset{t2/.style={}}

\node[type0, scale=1.3,  label={[label distance=-0.02cm]90:{${v_0}$}}] (v0) at (0.5, 0) {};
\node[type1, scale=1.3, label={[label distance=-0.06cm]-90:{$v_1$}}] (v1) at (0.25, -0.5) {};
\node[type0, scale=1.3, label={[label distance=-0.02cm]90:{$v_2$}}] (v2) at (0, 1) {};
\node[type1] (e) at (0, 0) {};
\node[type0] (v) at (-0.5, 0) {};
\node[type2] (f) at (1, 0) {};
\node[type1] (E) at (-1, 0) {};

\begin{scope}[t0]
\draw (f)  edge[bend left=90, looseness=1.5] (E) ;
\draw (f)  edge[bend right=85] (E) ;
\draw (f)  edge[bend left=50] (e) ;
\draw (e)  edge[bend left=50] (f) ;
\draw (f)  edge[bend left=30] (v1) ;
\end{scope}

\begin{scope}[t1]
\draw[ultra thick] (v0) -- (f);
\draw[ultra thick] (f)  edge[bend right=45] (v2) ;
\draw (v)  edge[bend left=70] (f) ;
\draw (v)  edge[bend right=80, looseness=1.5] (f) ;
\draw (v)  edge[bend right=55, looseness=1] (f) ;
\end{scope}

\begin{scope}[t2]
\draw[ultra thick] (v0) -- (e) -- (v) ;
\draw (v) -- (E);
\draw (v2)  edge[bend right=45] (E) ;
\draw[ultra thick] (v1)  edge[bend left=30] (v) ;
\end{scope}

\end{tikzpicture}} \qquad
	\scalebox{1}{\begin{tikzpicture}[scale=2.8]
\tikzset{type0/.style={shape=circle, draw=red, scale=0.4, fill=white}}
\tikzset{type1/.style={shape=circle, draw=black!30!green, scale=0.4, fill=white}}
\tikzset{type2/.style={shape=circle, draw=black, scale=0.4, fill=white}}
\tikzset{t0/.style={draw=red, densely dotted}}
\tikzset{t1/.style={draw=black!40!green, dashed}}
\tikzset{t2/.style={}}

\node[type0,   label={[label distance=-0.02cm]90:{$v_2$}}] (v) at (0, 0) {};


\node[type1] (e14) at ({cos(50+4*60)*cos(15)/cos(20)}, {sin(50+4*60)*cos(15)/cos(20)}) {};
\node[type0] (v14) at ({cos(40+4*60)*cos(15)/cos(10)}, {sin(40+4*60)*cos(15)/cos(10)}) {};
\node[type1, label={[label distance=-0.02cm]-90:{$v_1$}}] (e24) at ({cos(30+4*60)*cos(15)}, {sin(30+4*60)*cos(15)}) {};
\node[type0] (v24) at ({cos(20+4*60)*cos(15)/cos(10)}, {sin(20+4*60)*cos(15)/cos(10)}) {};
\node[type1] (e34) at ({cos(10+4*60)*cos(15)/cos(20)}, {sin(10+4*60)*cos(15)/cos(20)}) {};
\node[type0,  label={[label distance=-0.0cm]180:{$v_{0,L}$}}] (v34) at ({cos(4*60)*(cos(15)/cos(30))}, {sin(4*60)*(cos(15)/cos(30))}) {};
\node[type1] (e44) at ({cos(40+4*60)*cos(15)/cos(10)/2}, {sin(40+4*60)*cos(15)/cos(10)/2}) {};
\node[type2] (f4) at ({cos(4*60)*(cos(15)/cos(30))/2}, {sin(4*60)*(cos(15)/cos(30))/2}) {};

\node[type2] (f5) at ({cos(5*60)*(cos(15)/cos(30))/2}, {sin(5*60)*(cos(15)/cos(30))/2}) {};
\node[type0,  label={[label distance=-0.0cm]0:{$v_{0,R}$}}] (v35) at ({cos(5*60)*(cos(15)/cos(30))}, {sin(300)*(cos(15)/cos(30))}) {};

\draw[t2, ultra thick] (v35) -- (e14) -- (v14) --(e24) ;
\draw[t2, ultra thick] (e24) -- (v24) -- (e34) -- (v34) ;
\draw[t2] (v14) -- (e44)--(v);
\draw[t1] (v14) -- (f4) -- (v24);
\draw[t1, ultra thick] (v) -- (f4) -- (v34);
\draw[t0] (e24) -- (f4) -- (e34);
\draw[t0] (f4) -- (e44);
\draw[t0] (e14) -- (f5) -- (e44);
\draw[t1] (v14) -- (f5);

\draw[t1, ultra thick] (v) -- (f5) -- (v35);

\end{tikzpicture}}
	\caption{\label{fig:lopsp_gyro}On the left, the lopsp-operation gyro is shown. The thicker edges are the edges of a cut-path $P$. On the right, $O_P$ is drawn. The two copies of $P$ in $O_P$ are drawn thicker.}
\end{figure}
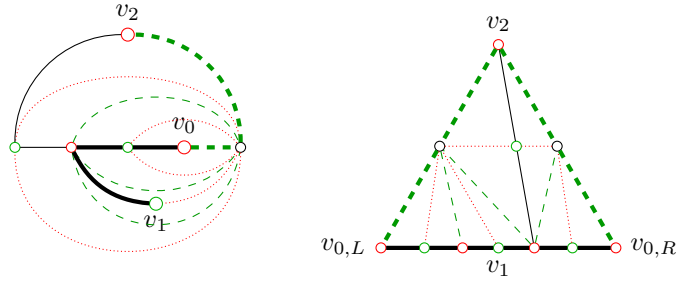

An example of a lopsp-operation is shown in Figure~\ref{fig:lopsp_gyro}. We say that an edge is of \emph{colour $i$} if it is not incident to a vertex of colour $i$. This is well-defined because of (2). Edges of colours 0 and 1 are respectively dotted and dashed in figures. Note that the edges incident
with a vertex are of two different colours, and as every face is a triangle, these colours appear in alternating order in the cyclic order of edges around the vertex. This implies that every
vertex has an even degree. It follows easily from the definition that $O$ is a 2-connected map.
Every face has exactly one vertex and one edge of each colour. We will also call these faces \emph{chambers}. The number of chambers in a lopsp-operation divided by two is the \emph{inflation factor} of the operation.

To apply a lopsp-operation $O$ to a map $M$, choose a path $P$ in $O$ from $v_1$ to $v_0$ to $v_2$. This path is called the \emph{cut-path}. Cut $O$ along this path to create a new map $O_P$ that has a new face, the outer face, that has more than three edges and consists of two copies of $P$. An example of $O_P$ is shown for gyro in Figure~\ref{fig:lopsp_gyro}. The two copies of $v_0$ are marked $v_{0,L}$ and $v_{0,R}$. Now take the barycentric subdivision $B_M$ of $M$, remove the edges of colour 0, and glue a copy of $O_P$ into every face, such that every copy of $v_i$ is glued to a vertex of colour $i$. The edges of $B_M$ are replaced by copies of $P_{v_0,v_1}$ and $P_{v_0,v_2}$. This process is shown in Figure~\ref{fig:gyro_applied}. The result is the barycentric subdivision of a map $O(M)$ (\cite{lopsp2021}), which is the result of applying $M$ to $O$. It is proved in \cite{lopsp2021} that $O(M)$ is independent of the chosen cut-path. Note that the number of chambers in $B_{O(M)}$ is the inflation factor times the number of chambers in $B_M$. This shows that the inflation factor is a measure of how much the operation increases the size of the map.

We can now see how $O$ preserves the orientation-preserving symmetries. It is not only true that $Aut_{OP}(M)\leq Aut_{OP}(O(M))$, every automorphism of $M$ induces an automorphism of $O(M)$ in a natural way: 
Consider the submap $X$ of $B_{O(M)}$ that consists of all the edges that are copies of edges of $P$. As $O(M)$ is constructed by gluing copies of $O_P$ into $B_M$ without the edges of colour 0, the map $X$ is isomorphic to $B_M$ without the edges of colour 0 and with the other edges subdivided as many times as necessary. Every orientation-preserving automorphism of $M$ induces an orientation-preserving automorphism of $X$, and therefore also of $B_{O(M)}$ and $O(M)$. Such an automorphism maps each copy of $O_P$ to another copy of $O_P$. 

\begin{figure}
	\centering
	\scalebox{0.9}{\begin{tikzpicture}[scale=1.8]
\tikzset{type0/.style={shape=circle, draw=red, scale=0.4, fill=white}}
\tikzset{type1/.style={shape=circle, draw=black!30!green, scale=0.4, fill=white}}
\tikzset{type2/.style={shape=circle, draw=black, scale=0.4, fill=white}}
\tikzset{0edge/.style={draw=red, dotted}}
\tikzset{1edge/.style={draw=black!40!green, dashed}}
\tikzset{2edge/.style={}}

\fill[blue!10!white] (0,0) -- ({-cos(15)*tan(30)}, {cos(15)})-- (0, {cos(15)}) -- ({cos(15)*tan(30)}, {cos(15)}) -- cycle;

\node[type2] (v2) at (0, 0) {};

\node[type1] (v1) at (0, {cos(15)}) {};
\node[type1] (v11) at ({cos(15) *cos(30)}, {cos(15)*sin(30)}) {};
\node[type1] (v12) at ({cos(15) *cos(30)}, {-cos(15)*sin(30)}) {};
\node[type1] (v13) at (0, {-cos(15)}) {};
\node[type1] (v14) at ({-cos(15) *cos(30)}, {-cos(15)*sin(30)}) {};
\node[type1] (v15) at ({-cos(15) *cos(30)}, {cos(15)*sin(30)}) {};

\node[type0] (v0) at ({cos(15)*tan(30)}, {cos(15)}) {};
\node[type0] (v01) at ({cos(15)/sin(60)}, 0) {};
\node[type0] (v02) at ({cos(15)*tan(30)}, {-cos(15)}) {};
\node[type0] (v03) at ({-cos(15)*tan(30)}, {-cos(15)}) {};
\node[type0] (v04) at ({-cos(15)/sin(60)}, 0) {};
\node[type0] (v05) at ({-cos(15)*tan(30)}, {cos(15)}) {};

\begin{scope}[0edge]
\draw[] (v1) -- (v2) ;
\draw (v2) -- (v11);
\draw (v12) -- (v2) -- (v13);
\draw (v14) -- (v2) -- (v15);
\end{scope}

\begin{scope}[2edge]
\draw[ultra thick] (v05)--(v1) -- (v0);
\draw (v0) --(v11)--(v01) -- (v12) -- (v02) -- (v13) 
-- (v03) -- (v14) -- (v04) -- (v15) -- (v05);
\end{scope}

\begin{scope}[1edge]
\draw[ultra thick] (v0) -- (v2) --(v05);
\draw (v2) -- (v01);
\draw (v02) -- (v2) -- (v03);
\draw (v04) -- (v2);
\end{scope}

\end{tikzpicture}} 
	\qquad\qquad
	\scalebox{0.9}{\begin{tikzpicture}[scale=1.8]
\tikzset{type0/.style={shape=circle, draw=red, scale=0.4, fill=white}}
\tikzset{type1/.style={shape=circle, draw=black!30!green, scale=0.4, fill=white}}
\tikzset{type2/.style={shape=circle, draw=black, scale=0.4, fill=white}}
\tikzset{t0/.style={draw=red, densely dotted}}
\tikzset{t1/.style={draw=black!40!green, dashed}}
\tikzset{t2/.style={}}

\fill[blue!10!white] (0,0) -- ({-cos(15)*tan(30)}, {cos(15)}) -- (0, {cos(15)}) -- ({cos(15)*tan(30)}, {cos(15)}) -- cycle;

\node[type0] (v) at (0, 0) {};

\foreach \i in {0,...,5}
{
\node[type1] (e1\i) at ({cos(50+\i*60)*cos(15)/cos(20)}, {sin(50+\i*60)*cos(15)/cos(20)}) {};
\node[type0] (v1\i) at ({cos(40+\i*60)*cos(15)/cos(10)}, {sin(40+\i*60)*cos(15)/cos(10)}) {};
\node[type1] (e2\i) at ({cos(30+\i*60)*cos(15)}, {sin(30+\i*60)*cos(15)}) {};
\node[type0] (v2\i) at ({cos(20+\i*60)*cos(15)/cos(10)}, {sin(20+\i*60)*cos(15)/cos(10)}) {};
\node[type1] (e3\i) at ({cos(10+\i*60)*cos(15)/cos(20)}, {sin(10+\i*60)*cos(15)/cos(20)}) {};
\node[type0] (v3\i) at ({cos(\i*60)*(cos(15)/cos(30))}, {sin(\i*60)*(cos(15)/cos(30))}) {};
\node[type1] (e4\i) at ({cos(40+\i*60)*cos(15)/cos(10)/2}, {sin(40+\i*60)*cos(15)/cos(10)/2}) {};
\node[type2] (f\i) at ({cos(\i*60)*(cos(15)/cos(30))/2}, {sin(\i*60)*(cos(15)/cos(30))/2}) {};
}

\foreach \j / \i in {1/0,4/3,5/4,0/5}
{
\draw[t2] (v3\j) -- (e1\i) -- (v1\i) --(e2\i) -- (v2\i) -- (e3\i) -- (v3\i) ;
\draw[t2] (v1\i) -- (e4\i)--(v);
\draw[t1] (v1\i) -- (f\i) -- (v2\i);
\draw[t1] (v) -- (f\i) -- (v3\i);
\draw[t0] (e2\i) -- (f\i) -- (e3\i);
\draw[t0] (f\i) -- (e4\i);
\draw[t0] (e1\i) -- (f\j) -- (e4\i);
\draw[t1] (v1\i) -- (f\j);
}

\foreach \j / \i in {2/1}
{
\begin{scope}[very thick]
\draw[t2,very thick] (v3\j) -- (e1\i) -- (v1\i) --(e2\i) -- (v2\i) -- (e3\i) -- (v3\i) ;
\draw[t2,very thick] (v1\i) -- (e4\i)--(v);
\draw[t1,very thick] (v1\i) -- (f\i) -- (v2\i);
\draw[t1,very thick] (v) -- (f\i) -- (v3\i);
\draw[t0,very thick] (e2\i) -- (f\i) -- (e3\i);
\draw[t0,very thick] (f\i) -- (e4\i);
\draw[t0,very thick] (e1\i) -- (f\j) -- (e4\i);
\draw[t1,very thick] (v1\i) -- (f\j);
\end{scope}
}

\foreach \j / \i in {3/2}
{
\draw[t2] (v3\j) -- (e1\i) -- (v1\i) --(e2\i) -- (v2\i) -- (e3\i) -- (v3\i) ;
\draw[t2] (v1\i) -- (e4\i)--(v);
\draw[t1] (v1\i) -- (f\i) -- (v2\i);
\draw[t1, very thick] (v) -- (f2) -- (v32);
\draw[t0] (e2\i) -- (f\i) -- (e3\i);
\draw[t0] (f\i) -- (e4\i);
\draw[t0] (e1\i) -- (f\j) -- (e4\i);
\draw[t1] (v1\i) -- (f\j);
}

\end{tikzpicture}}
	
	\caption{\label{fig:gyro_applied}On the left, the barycentric subdivision of a hexagonal face is shown. On the right, the lopsp-operation gyro is applied to it. The blue shaded area shows one double chamber.}
\end{figure}
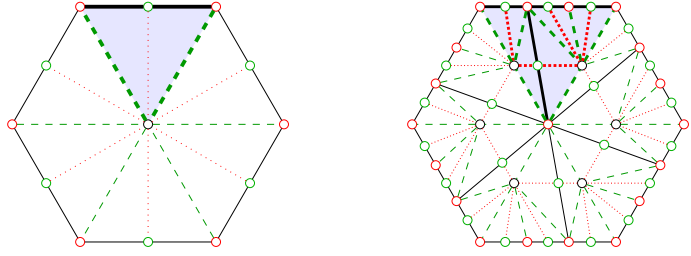

\begin{definition}\label{def:predecoration}
	Let $O$ be a lopsp-operation. If $c(v_1)\neq 1$, then the \emph{predecoration} $Q(O)$ is the submap of $O$ consisting of all vertices of colours 0 and 2 and all edges of colour 1. If $c(v_1)= 1$, then $Q(O)$ is also that map, but together with $v_1$ and the edge of colour 2 incident with $v_1$. 
\end{definition}

It follows from the degree restrictions in Definition \ref{def:lopsp} that every predecoration is a plane quadrangulation.

\section{Generating quadrangulations} \label{sec:quads}
Generating lopsp-operations will be done by first generating all predecorations, which are all plane quadrangulations. In this section we describe our algorithm for generating quadrangulations, and how our implementation was tested. We also show how this generator can be used to generate all plane maps.

\subsection{Generation}
Let $\mathcal{Q}$ be the class of all plane quadrangulations, parallel edges allowed, and let $\mathcal{Q}_n$ be the class of all plane quadrangulations with $n$ vertices. Note that loops cannot occur as all plane quadrangulations are bipartite. The algorithm works in the following way: Start with the smallest possible quadrangulation(s). Then recursively apply \emph{extensions} that increase the number of vertices, until the required number of vertices is reached. Every extension must turn a plane quadrangulation into another plane quadrangulation. We say that a set of extensions \emph{generates} a class of maps from a set of base maps if every map in the class can be obtained by repeatedly applying extensions from the set to one of the base maps.

To ensure that no isomorphic maps are generated, the \emph{canonical construction path method}, described in \cite{isomorphfree_generation} is used. The basic idea of this method is that for every intermediate map that is constructed, a canonical ancestor is determined. If the map was not generated from that ancestor, it is rejected. Otherwise it is accepted and extensions may be applied to it. In this way, each map can only be generated once. 
This algorithm is the same as the one used to generate simple quadrangulations in \cite{simplequads}, but we use one extension more. All other generators in plantri also use this method (\cite{plantri_program,plantri_article}). Theorem \ref{thm:extensions_sufficient} proves that the extensions in Figure \ref{fig:extensions} are sufficient to generate all plane quadrangulations. For the generation of simple plane quadrangulations only extensions $P_0$ and $P_1$ were used. The inverse of an extension is called a \emph{reduction}.

\begin{theorem}\label{thm:extensions_sufficient}
	Extensions $D_1$, $P_0$, and $P_1$ generate $\mathcal{Q}$ from the path graph with 3 vertices.
\end{theorem}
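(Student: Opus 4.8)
The plan is to prove the theorem by induction on the number of vertices, using the standard reduction--existence argument: it suffices to show that every $M\in\mathcal{Q}$ with $|V_M|\geq 4$ admits a \emph{reduction} -- the inverse of one of $D_1$, $P_0$, $P_1$ -- whose output is again a plane quadrangulation with strictly fewer vertices. Granting this, the induction closes: the base case is $\mathcal{Q}_3$, and a short computation from Euler's formula $|V_M|-|E_M|+|F_M|=2$ together with $4|F_M|=2|E_M|$ shows that there is no quadrangulation on fewer than three vertices and that the unique one on exactly three vertices is the path $P_3$. Before running the induction I would also check the routine fact that each of $D_1$, $P_0$, $P_1$ sends a plane quadrangulation to a plane quadrangulation, by inspecting the local pictures in Figure~\ref{fig:extensions}; this makes ``generates'' meaningful, since it guarantees that everything built from $P_3$ really lies in $\mathcal{Q}$.

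For the inductive step, fix $M\in\mathcal{Q}_n$ with $n\geq 4$ and split according to whether $M$ is simple. If $M$ is simple, I would invoke the generation theorem for simple plane quadrangulations from \cite{simplequads}: simple plane quadrangulations are generated from the pseudo-double wheels using exactly the two operations here called $P_0$ and $P_1$ (in particular, removing a vertex of degree $2$ is among the available reductions). Hence either $M$ admits a $P_0^{-1}$ or $P_1^{-1}$ reduction to a smaller, not necessarily simple, quadrangulation and we are done, or $M$ is a pseudo-double wheel. For the pseudo-double wheels, and for the remaining case that $M$ has parallel edges, the new operation $D_1$ does the work: for a pseudo-double wheel I would exhibit a $D_1^{-1}$ reduction to a smaller quadrangulation (tracing the resulting chain downward until it reaches $P_3$), and if $M$ has parallel edges I would choose a pair of parallel edges bounding an inclusion-minimal closed disc $R$ on the sphere, analyse the forced local structure of $M$ inside $R$ along those two edges -- the minimality of $R$ severely restricts what can occur there -- and verify that a $D_1^{-1}$ reduction applies at that site and produces a map all of whose faces again have size $4$.

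The main obstacle is exactly this reduction--existence case analysis, and within it two points deserve the most care. First, one must ensure that every reduction stays inside $\mathcal{Q}$, i.e.\ never creates a face of size different from $4$; this is where choosing the reduction site carefully -- an innermost pair of parallel edges, or the configuration supplied by \cite{simplequads} -- is essential, because a reduction carried out at a ``bad'' location can merge faces incorrectly. Second, one must check that the three operations together leave no quadrangulation irreducible except $P_3$: alone, $P_0$ and $P_1$ get stuck at the pseudo-double wheels in the simple world and cannot reach any multi-edged quadrangulation at all, so the entire burden of (a) connecting the pseudo-double wheels to $P_3$ and (b) reducing every non-simple quadrangulation falls on $D_1$, and the argument must show $D_1$ is precisely strong enough for both. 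I note finally that isomorph-freeness of the generation -- the canonical construction path of \cite{isomorphfree_generation} -- is irrelevant to this statement; here we only need that the extensions \emph{reach} every member of $\mathcal{Q}$, not that they reach it exactly once.
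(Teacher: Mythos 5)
Your induction framework, base case, and the reduction--existence reformulation all match the paper, and the simple case handled via \cite{simplequads} is fine up to the point where $M$ is a pseudo-double wheel. The genuine gap is that you assign to $D_1^{-1}$ two jobs it cannot perform. From the paper's proof one sees that $D_1$ creates a pendant vertex together with a parallel edge (the new size-4 face is a 2-gon with a degree-1 vertex hanging inside it), so $D_1^{-1}$ can only be applied where the map has a vertex of degree 1. A pseudo-double wheel is simple with minimum degree at least 2 (hubs of degree $\geq 2$, rim vertices of degree 3), so it admits no $D_1^{-1}$ reduction at all. Worse, a quadrangulation with parallel edges need not have any degree-1 vertex either, and the minimality of your disc $R$ does not force one: take two vertices $x,y$ joined by parallel edges $e_1,e_2$, put a path $x\hbox{--}a\hbox{--}b\hbox{--}y$ inside the 2-gon and a path $x\hbox{--}c\hbox{--}d\hbox{--}y$ outside. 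This is a plane quadrangulation (6 vertices, 8 edges, 4 faces of size 4) whose unique 2-cycle is innermost on both sides, yet every vertex has degree 2 or 4, so $D_1^{-1}$ applies nowhere. The interior of an innermost 2-cycle is essentially an arbitrary simple quadrangulation spanned between $x$ and $y$, not a configuration forced to contain a pendant vertex. So your claim that ``the entire burden of reducing every non-simple quadrangulation falls on $D_1$'' is exactly backwards: $D_1$ is needed precisely and only for degree-1 vertices, while non-simple quadrangulations of minimum degree $\geq 2$ must be reduced by $P_0^{-1}$ or $P_1^{-1}$.

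To close the gap you would need the argument the paper actually gives for the minimum-degree-$\geq 2$ case: extract a simple submap $Q'$ (either $M$ itself, or the closed interior of an innermost 2-cycle together with one edge of that cycle, after checking both cycle vertices have interior neighbours); apply Euler's formula to $Q'$ to get $\sum_v \deg(v)=4|V_{Q'}|-8$ and hence at least four vertices of degree at most 3; pick one that is not on the bounding 2-cycle, so that its degree in $M$ is still 2 or 3 and its incident faces are simple; and apply $P_0^{-1}$ or $P_1^{-1}$ there. This degree count also disposes of your pseudo-double wheel terminal case without any appeal to $D_1$ (a degree-2 hub or degree-3 rim vertex admits $P_0^{-1}$ or $P_1^{-1}$ once non-simple intermediate quadrangulations are permitted), and indeed the paper never needs to invoke the generation theorem of \cite{simplequads}, only this counting argument.
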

\begin{proof}
	Let $Q$ be any quadrangulation different from the path graph with 3 vertices. All three extensions increase the number of vertices by one, so by induction it suffices to prove that $Q$ can be constructed from a smaller quadrangulation with extension $D_1$, $P_0$, or $P_1$.
	
	Assume first that $Q$ has a vertex $v$ of degree 1. The vertex adjacent to $v$ cannot have degree 1 for then $Q$ would be the path-graph with 2 vertices, which is not a quadrangulation. If it has degree 2 then as every face has size four, $Q$ is the path graph with 3 vertices which we assumed is not the case. If it has degree 3 or higher then $v$ is in a face of size four on which reduction $D_1$ can be applied.
	
	Now assume that every vertex of $Q$ has degree at least 2. We will construct a submap $Q'$ of $Q$ that is a simple plane quadrangulation. If $Q$ has no cycles of length 2, let $Q'$ be $Q$. Otherwise, let $c$ be an innermost 2-cycle in $Q$, i.e.\ a cycle that has no other 2-cycles on one of its sides, which we will call the inside. Both vertices of $c$ have a neighbour on the inside. Otherwise, as every face has size 4, there would either be a vertex of degree 1 on the inside, or there would be two edges between a vertex on the inside and a vertex of $c$ and $c$ would not be innermost. Let $Q'$ be the map consisting of all vertices and edges on the inside of $c$, together with one edge of $c$. As each vertex of $c$ has a neighbour on the inside, $Q'$ has no vertices of degree 1. This map $Q'$ is a simple plane quadrangulation, and each of its faces has a corresponding face in $Q$. With the Euler formula it follows that $|E_{Q'}|=2|V_{Q'}| - 4$, and therefore \[\sum_{v\in V_{Q'}}deg(v) = 2|E_{Q'}| = 4|V_{Q'}| - 8.\] 
	This implies that there are at least 4 vertices of degree at most 3. It follows that there is a vertex of degree at most 3 that is not a vertex of $c$ and therefore it also has degree 2 or 3 in $Q$. We can apply reduction $P_0$ or $P_1$ on a face of $Q$ containing that vertex. Such a face is simple because $Q'$ is simple.
\end{proof}

\begin{figure}
	\begin{center}
		$\vcenter{\hbox{\scalebox{1.5}{\begin{tikzpicture}[scale=0.8]
\tikzset{every node/.style={draw=black,fill=black, shape=circle, scale=0.4}}

\node (a) at (-1,0) {};
\node (b) at (1,0) {};

\begin{scope}
\draw (a) edge (b);
\end{scope}

\draw (a) -- (-1.4,-0.1) -- (-1.4,0.1) -- (a);
\draw (b) -- (1.4,-0.1) -- (1.4,0.1) -- (b);
\draw[draw=none] (-1.5,0) -- +(-0.4,-0.1)
-- +(-0.4,0.1) -- (-1.5,0);
\draw[draw=none] (1.5,0) -- +(0.4,-0.1)
-- +(0.4,0.1) -- (1.5,0);

\end{tikzpicture}}}} 
		\vpointer{D_1}
		\vcenter{\hbox{\scalebox{1.5}{\begin{tikzpicture}[scale=0.8]
\tikzset{every node/.style={draw=black,fill=black, shape=circle, scale=0.4}}

\node (a) at (-1,0) {};
\node (b) at (1,0) {};
\node (c) at (0,0) {};

\begin{scope}
\draw (a) edge[in = 110, out = 70, looseness = 1.3] (b);
\draw (a) edge[in = -110, out = -70, looseness = 1.3] (b);
\end{scope}

\draw (a)--(c);
\draw (a) -- (-1.4,-0.1) -- (-1.4,0.1) -- (a);
\draw (b) -- (1.4,-0.1) -- (1.4,0.1) -- (b);

\draw[draw=none] (-1.5,0) -- +(-0.4,-0.1)
-- +(-0.4,0.1) -- (-1.5,0);
\draw[draw=none] (1.5,0) -- +(0.4,-0.1)
-- +(0.4,0.1) -- (1.5,0);
\end{tikzpicture}}}}$
		
		\vspace{0.5cm}
		
		$\vcenter{\hbox{\scalebox{1.5}{\begin{tikzpicture}[scale=0.8]
\tikzset{every node/.style={draw=black,fill=black, shape=circle, scale=0.4}}

\node (a) at (-1.5,0) {};
\node (b) at (1.5,0) {};
\node (c) at (0,0) {};

\begin{scope}
\draw (a) --(c) -- (b);
\end{scope}

\draw (a) -- +(-0.4,-0.1)
		  -- +(-0.4,0.1) -- (a);
\draw (b) -- +(0.4,-0.1)
		  -- +(0.4,0.1) -- (b);
\draw (c) -- +(0.1,-0.4)
		  -- +(-0.1,-0.4) -- (c);
\end{tikzpicture}}}} 
		\vpointer{P_0}
		\vcenter{\hbox{\scalebox{1.5}{\begin{tikzpicture}[scale=0.8]
\tikzset{every node/.style={draw=black,fill=black, shape=circle, scale=0.4}}

\node (a) at (-1.5,0) {};
\node (b) at (1.5,0) {};
\node (c) at (0,-0.5) {};
\node (d) at (0,0.5) {};

\begin{scope}
\draw (a) --(c) -- (b) -- (d) -- (a);
\end{scope}

\draw (a) -- +(-0.4,-0.1)
		  -- +(-0.4,0.1) -- (a);
\draw (b) -- +(0.4,-0.1)
		  -- +(0.4,0.1) -- (b);
\draw (c) -- +(0.1,-0.4)
		  -- +(-0.1,-0.4) -- (c);
\end{tikzpicture}}}}$
		
		\vspace{0.5cm}
		
		$\vcenter{\hbox{\scalebox{1.5}{\begin{tikzpicture}[scale=0.8]
\tikzset{every node/.style={draw=black,fill=black, shape=circle, scale=0.4}}

\node (a) at (-1.5,0) {};
\node (b) at (1.5,0) {};
\node (c) at (0,0) {};

\begin{scope}
\draw (a) --(c) -- (b) ;
\end{scope}

\draw (a) -- +(-0.4,-0.1)
		  -- +(-0.4,0.1) -- (a);
\draw (b) -- +(0.4,-0.1)
		  -- +(0.4,0.1) -- (b);
\draw (c) -- +(0.1,-0.4)
		  -- +(-0.1,-0.4) -- (c);
\draw (c) -- +(0,0.4);
\draw (c) -- +(-0.2,-0.4);
\end{tikzpicture}}}} 
		\vpointer{P_1}
		\vcenter{\hbox{\scalebox{1.5}{\begin{tikzpicture}[scale=0.8]
\tikzset{every node/.style={draw=black,fill=black, shape=circle, scale=0.4}}

\node (a) at (-1.5,0) {};
\node (b) at (1.5,0) {};
\node (c) at (0,-0.5) {};
\node (d) at (0,0.5) {};

\begin{scope}
\draw (a) --(c) -- (b) -- (d) -- (a);
\end{scope}

\draw (a) -- +(-0.4,-0.1)
		  -- +(-0.4,0.1) -- (a);
\draw (b) -- +(0.4,-0.1)
		  -- +(0.4,0.1) -- (b);
\draw (c) -- +(0.1,-0.4)
		  -- +(-0.1,-0.4) -- (c);
\draw (d) -- +(0,0.4);
\draw (c) -- +(-0.2,-0.4);
\end{tikzpicture}}}}$
	\end{center}
	\caption{Three different extensions that can be used to generate all plane quadrangulations. Every vertex and edge in each of the drawings is distinct. A triangle indicates that there may be an arbitrary number of edges incident to the vertex in that place.}
	\label{fig:extensions}
\end{figure}
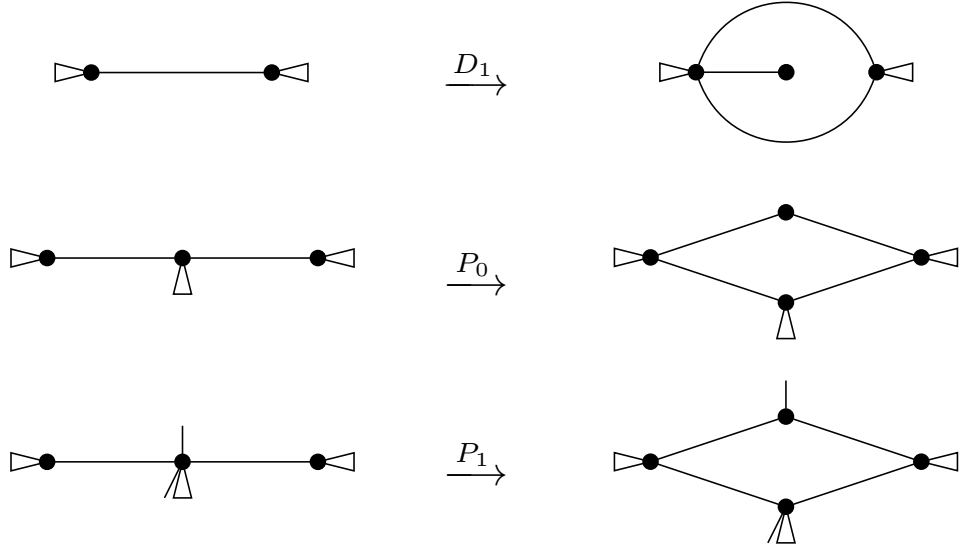

\subsection{Testing}

\begin{table}
	\centering
	\begin{tabular}{ccc|r}
		$|V|$ & $|E|$ & $|F|$& $|\mathcal{Q}_{|V|}|$ \\
		\hline
		3 & 2& 1 & 1 \\
		
		4& 4& 2& 3 \\
		
		5& 6& 3& 7 \\
		
		6& 8& 4& 30 \\
		
		7& 10& 5& 124 \\
		
		8& 12& 6& 733 \\
		
		9& 14& 7& 4 586 \\
		
		10& 16& 8& 33 373 \\
		
		11& 18&9 & 259 434 \\
		
		12& 20& 10& 2 152 298 \\
		
		13& 22&11 & 18 615 182 \\
		
		14& 24& 12& 166 544 071 \\
		
		15& 26&13 & 1 528 659 536 \\
		
		16& 28&14 & 14 328 433 429 \\
		
		17& 30&15 & 136 649 176 084 \\
		
		18& 32&16 & 1 322 594 487 342 \\
		
		19& 34&17 & 12 965 736 092 988 \\
		
		20& 36&18 & 128 543 259 338 048 \\
		
	\end{tabular}
	\caption{This table shows the number of non-isomorphic plane quadrangulations with a given number of vertices, edges, or faces.}\label{tab:quadcounts}
\end{table}

We implemented this algorithm as an extension of plantri and with it we calculated the number of non-isomorphic plane quadrangulations up to 20 vertices, as shown in Table \ref{tab:quadcounts}. 
To check the program, different tests were used. The counts we obtained were compared with existing counts of quadrangulations. In the literature we only found counts for $n\leq 12$ in the most general case, see \cite{quadcounts_knots, quadcounts_equilibrium}, so this check could not be done for higher numbers of vertices. There are however theoretical formulas for the number of quadrangulations with $n$ vertices up to orientation-preserving automorphism, see \cite{quadcountsOP}, and the number of rooted quadrangulations, see \cite{quadcounts_rooted}. As the program can also generate these, our counts were checked against these theoretical values.

For $n\leq 17$ it was checked that our program does not output quadrangulations that are isomorphic to each other. 
Another check was done for $n\leq 15$. This check is based on the \emph{rotations} shown in Figure~\ref{fig:flips}. These are local changes that can be made to quadrangulations to get other quadrangulations with the same number of vertices. We applied all possible rotations of type $A$ and $C$ to every generated quadrangulation and checked if we had also generated the result. With Theorem \ref{thm:general_quad_flips} this check proves that all quadrangulations were generated.
\begin{figure}
	\begin{center}
		$\vcenter{\hbox{\scalebox{1.5}{\begin{tikzpicture}
\tikzset{every node/.style={draw=black,fill=black, shape=circle, scale=0.4}}

\node (a) at (0,1) {};
\node (b) at (1,1) {};
\node (c) at (-0.5,0) {};
\node (d) at (1.5,0) {};
\node (e) at (0,-1) {};
\node (f) at (1,-1) {};

\begin{scope}
\draw (a)--(b)--(d)--(f)--(e)--(c)--(a);
\end{scope}

\draw[dashed] (c)--(d);
\end{tikzpicture}}}} 
		\vpointer{A}
		\vcenter{\hbox{\scalebox{1.5}{\begin{tikzpicture}
\tikzset{every node/.style={draw=black,fill=black, shape=circle, scale=0.4}}

\node (a) at (0,1) {};
\node (b) at (1,1) {};
\node (c) at (-0.5,0) {};
\node (d) at (1.5,0) {};
\node (e) at (0,-1) {};
\node (f) at (1,-1) {};

\begin{scope}
\draw (a)--(b)--(d)--(f)--(e)--(c)--(a);
\end{scope}

\draw[red] (a)--(f);
\end{tikzpicture}}}}$
		
		\vspace{0.5cm}
		
		$\vcenter{\hbox{\scalebox{1.5}{\begin{tikzpicture}
\tikzset{every node/.style={draw=black,fill=black, shape=circle, scale=0.4}}

\node (a) at (0,1.7) {};
\node (b) at (1.7,1.7) {};
\node (c) at (-0,0) {};
\node (d) at (1.7,0) {};
\node (e) at (0.85,0.85) {};

\node[draw=none, fill=none] (_) at (1.85,2) {};
\node[draw=none, fill=none] (_) at (-0.15,0) {};

\begin{scope}
\draw (a)--(b)--(d)--(c)--(a);
\end{scope}

\draw[dashed] (a)--(e)--(d);
\end{tikzpicture}}}} 
		\vpointer{B}
		\vcenter{\hbox{\scalebox{1.5}{\begin{tikzpicture}
\tikzset{every node/.style={draw=black,fill=black, shape=circle, scale=0.4}}

\node (a) at (0,1.7) {};
\node (b) at (1.7,1.7) {};
\node (c) at (-0,0) {};
\node (d) at (1.7,0) {};
\node (e) at (0.85,0.85) {};

\node[draw=none, fill=none] (_) at (1.85,2) {};
\node[draw=none, fill=none] (_) at (-0.15,0) {};

\begin{scope}
\draw (a)--(b)--(d)--(c)--(a);
\end{scope}

\draw[red] (b)--(e)--(c);
\end{tikzpicture}}}}$
		
		\vspace{0.5cm}
		
		$\vcenter{\hbox{\scalebox{1.5}{\begin{tikzpicture}
\tikzset{every node/.style={draw=black,fill=black, shape=circle, scale=0.4}}

\node (a) at (-1,0) {};
\node (b) at (1,0) {};
\node (c) at (0,0) {};

\begin{scope}
\draw (a) edge[in = 110, out = 70, looseness = 1.3] (b);
\draw (a) edge[in = -110, out = -70, looseness = 1.3] (b);
\end{scope}

\draw[dashed] (a)--(c);
\end{tikzpicture}}}} 
		\vpointer{C}
		\vcenter{\hbox{\scalebox{1.5}{\begin{tikzpicture}
\tikzset{every node/.style={draw=black,fill=black, shape=circle, scale=0.4}}

\node (a) at (-1,0) {};
\node (b) at (1,0) {};
\node (c) at (0,0) {};

\begin{scope}
\draw (a) edge[in = 110, out = 70, looseness = 1.3] (b);
\draw (a) edge[in = -110, out = -70, looseness = 1.3] (b);
\end{scope}

\draw[red] (b)--(c);
\end{tikzpicture}}}}$
	\end{center}
	\caption{Three different rotations that can be applied to quadrangulations. The vertices drawn are not necessarily distinct.}
	\label{fig:flips}
\end{figure}
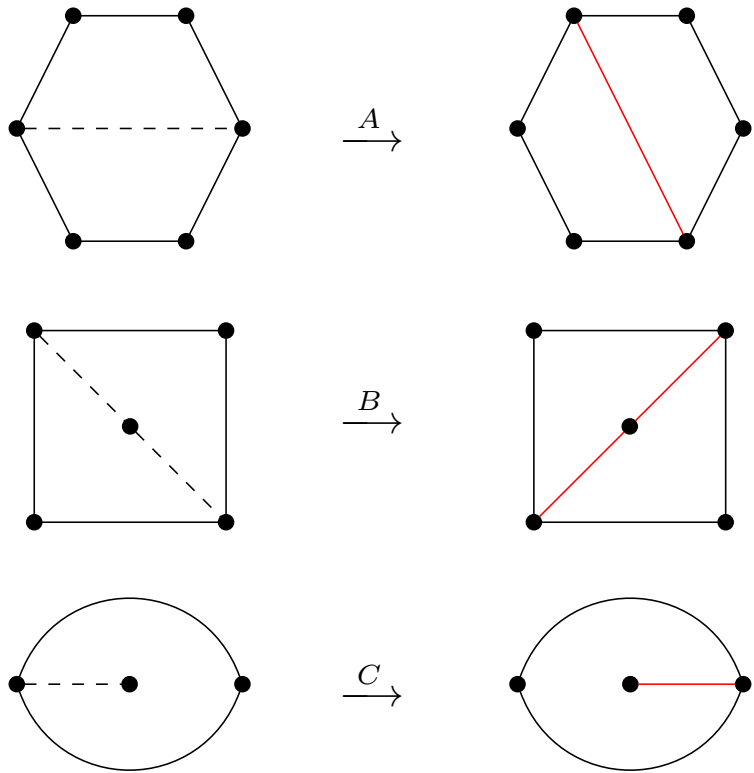

\begin{theorem}\label{thm:general_quad_flips} 
	Let $n\in \mathbb{N}$. Any two plane quadrangulations $Q_1\in \mathcal{Q}_n$ and $Q_2\in \mathcal{Q}_n$ can be transformed into each other by a number of applications of $A$ and $C$.	
\end{theorem}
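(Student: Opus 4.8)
The plan is induction on $n$, using Theorem~\ref{thm:extensions_sufficient} as the engine together with a ``lifting lemma'' that transfers connectivity from $\mathcal{Q}_{n-1}$ to $\mathcal{Q}_n$. Write $Q_1\sim Q_2$ for the equivalence relation on $\mathcal{Q}_n$ generated by the rotations $A$ and $C$ (each rotation and its reverse are moves of the same type, so working ``in either direction'' is no loss, and the theorem is exactly the assertion that each $\mathcal{Q}_n$ is a single $\sim$-class). The base case $n=3$ is trivial since $|\mathcal{Q}_3|=1$; the few further small cases needed to launch the induction are finite and are checked directly — deliberately \emph{without} the generator, since this theorem is later used to validate it.

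For the inductive step, fix $n\ge 4$ and assume $\mathcal{Q}_{n-1}$ is a single $\sim$-class. The heart of the argument is the following.

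\emph{Lifting Lemma.} If $R_1,R_2\in\mathcal{Q}_{n-1}$ with $R_1\sim R_2$, then any quadrangulation obtained from $R_1$ by one of the extensions $D_1,P_0,P_1$ is $\sim$-equivalent to any quadrangulation obtained from $R_2$ by one of these extensions.

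Granting this, the theorem follows immediately: given $Q_1,Q_2\in\mathcal{Q}_n$, the proof of Theorem~\ref{thm:extensions_sufficient} exhibits each $Q_i$ as an extension of some $R_i\in\mathcal{Q}_{n-1}$ (recall $n\ge 4$, so $Q_i\neq P_3$); by the induction hypothesis $R_1\sim R_2$; and by the Lifting Lemma $Q_1\sim Q_2$. (All $\mathcal{Q}_m$ with $m\ge 3$ are nonempty, so this is not vacuous.)

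The Lifting Lemma reduces, by chaining along a sequence of rotations that realises $R_1\sim R_2$ and using transitivity of $\sim$, to two local claims. \textbf{(A)} Any two extensions of a single $R\in\mathcal{Q}_{n-1}$ — of any of the three types, applied at any face — yield $\sim$-equivalent quadrangulations. \textbf{(B)} If $R'$ is obtained from $R$ by a single $A$- or $C$-rotation, then some extension of $R$ is $\sim$-equivalent to some extension of $R'$; concretely, one realises the rotation inside an extension of $R$ by a short sequence of $A$- and $C$-rotations, choosing the added material disjoint from the rotation's support when the face count allows and treating the overlapping configurations explicitly. Claim (A) is attacked in two stages: first, the ``new material'' of an extension can be pushed across an edge into a neighbouring face ($C$-rotations walk a new degree-one vertex around its face; $A$-rotations transport the new material of a $P_0$- or $P_1$-extension), and the faces of a quadrangulation are connected under adjacency, so the extension can be moved to any face; second, one shows that a $D_1$-, a $P_0$-, and a $P_1$-extension performed at a common location are interconvertible by $A$- and $C$-rotations.

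The main obstacle is precisely this local case analysis in (A) and (B). One must run through the possible shapes of a quadrilateral face (with or without repeated vertices, and with or without parallel edges on its boundary), handle the degree-one vertices governed by rotation $C$ separately and carefully, and — the genuinely delicate point — verify that the three extension types at a fixed place can be transformed into one another using only $A$ and $C$; this is presumably where rotation $B$ of Figure~\ref{fig:flips} appears as a convenient intermediate move that then has to be re-expressed in terms of $A$ and $C$. Once the Lifting Lemma is established, the inductive skeleton above is routine.
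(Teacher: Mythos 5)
Your overall strategy differs from the paper's, but as written it is not a proof: the two local claims (A) and (B) into which you reduce the Lifting Lemma are exactly where all the mathematical content lives, and you only describe how one would ``attack'' them rather than establishing them. In particular, the assertion that a $D_1$-, a $P_0$-, and a $P_1$-extension at a common location are interconvertible by $A$ and $C$, and the assertion that the new material of an extension can be walked across every face adjacency (including faces with repeated vertices, parallel boundary edges, and pendant vertices), are nontrivial and are left unverified; you even flag them as ``the main obstacle'' and ``the genuinely delicate point.'' There is also a structural reason to doubt that this case analysis is routine: your induction makes no use of the known flip-connectivity of \emph{simple} plane quadrangulations, so if your Lifting Lemma were established it would reprove that published theorem as a by-product. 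The deferred local analysis must therefore carry at least as much weight as the existing proof of the simple case. Finally, the phrase about checking ``the few further small cases needed to launch the induction'' is unsubstantiated — the induction as set up needs only $n=3$, and if extra base cases are secretly required because the disjointness arguments in (B) fail for small face counts, that is itself a gap that needs to be identified precisely.

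For comparison, the paper takes a much more economical route that sidesteps this entirely: it first observes that rotation $B$ is a composite $A,C,A$ and cites the literature for the fact that any two \emph{simple} plane quadrangulations on $n$ vertices are connected under $A$ and $C$; it then reduces the general case to the simple case by showing that any quadrangulation with parallel edges can have its number of parallel edges strictly decreased by at most one application of $C$ followed by one application of $A$, performed at an innermost $2$-cycle. That single local argument (choosing the innermost $2$-cycle so that the rotated edge joins a vertex inside to a vertex outside, hence cannot create a new parallel edge) replaces your entire lifting machinery. If you want to salvage your approach, you must actually carry out the case analysis for (A) and (B); as it stands, the proposal is a plan, not a proof.
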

\begin{proof}
	Operation $B$ can be replaced by applying $A$, then $C$ and then $A$ again as shown in Figure~\ref{fig:B_equals_ACA}. This implies that we can also use operation $B$ in our arguments.
	
	It is proved in \cite{flips, flips2} that any two simple, plane quadrangulations can be transformed into each other by applying $A$ and $C$. Therefore it suffices to prove that every quadrangulation with multiple edges can be transformed into a simple quadrangulation using rotations $A$ and $C$. We will prove that if a quadrangulation has parallel edges, then by applying a number of operations $A$ and $C$ we can get a quadrangulation with strictly fewer parallel edges. 
	
	Let $Q$ be a quadrangulation with parallel edges $e_1$ and $e_2$ that are incident to vertices $x$ and $y$ such that the cycle $c$ with edges $e_1$ and $e_2$ is an innermost 2-cycle. 
	On the outside of $c$ there is at least one vertex that is not $x$ or $y$, but it is adjacent to at least one of them. Assume w.l.o.g. that there is a vertex on the outside of $c$ that is adjacent to $x$. If $y$ has no neighbour on the inside of $c$, then there can be only one vertex on the inside of $c$ and it has degree one. In any other case, there would be parallel edges on the inside of $c$, which is impossible as $c$ is innermost. It follows that either $y$ has a neighbour on the inside of $c$, or we can apply $C$ so that $y$ has a neighbour on the inside of $c$. 
	
	Assume w.l.o.g.\ that the edges incident with $x$ on the outside of $c$ appear after $e_2$ and before $e_1$ in the cyclic order around $x$. 
	Now we can apply $A$ to $e_1$ such that it is replaced by an edge from a vertex $v$ adjacent to $x$ on the outside of $c$ and a vertex $w$ adjacent to $y$ on the inside of $c$. This is not a parallel edge, as $v$ and $w$ are on different sides of $c$ so they cannot be adjacent in $Q$. It follows that the number of parallel edges has decreased by possibly applying $C$ and then applying $A$.
\end{proof}

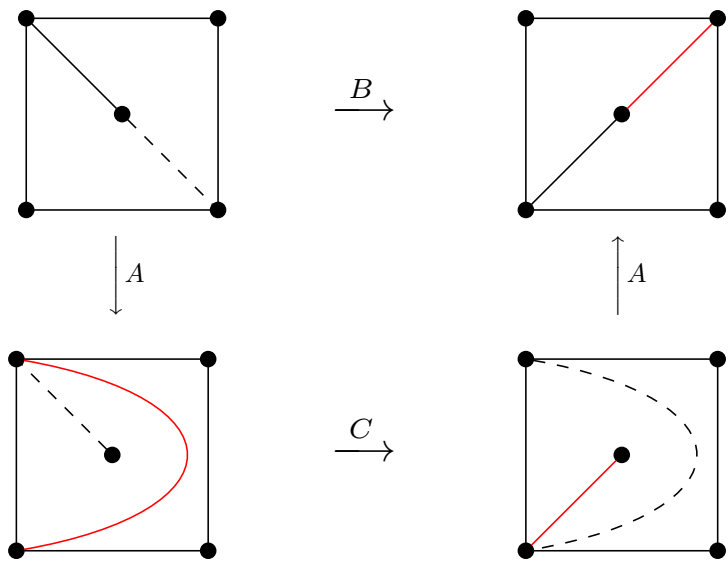
\begin{figure}
	\begin{center}
		$\vcenter{\hbox{\scalebox{1.5}{\begin{tikzpicture}
\tikzset{every node/.style={draw=black,fill=black, shape=circle, scale=0.4}}

\node (a) at (0,1.7) {};
\node (b) at (1.7,1.7) {};
\node (c) at (-0,0) {};
\node (d) at (1.7,0) {};
\node (e) at (0.85,0.85) {};

\node[draw=none, fill=none] (_) at (1.85,2) {};
\node[draw=none, fill=none] (_) at (-0.15,0) {};

\begin{scope}
\draw (e)--(a)--(b)--(d)--(c)--(a);
\end{scope}

\draw[dashed] (e)--(d);
\end{tikzpicture}}}} 
		\vpointer{B}
		\vcenter{\hbox{\scalebox{1.5}{\begin{tikzpicture}
\tikzset{every node/.style={draw=black,fill=black, shape=circle, scale=0.4}}

\node (a) at (0,1.7) {};
\node (b) at (1.7,1.7) {};
\node (c) at (-0,0) {};
\node (d) at (1.7,0) {};
\node (e) at (0.85,0.85) {};

\node[draw=none, fill=none] (_) at (1.85,2) {};
\node[draw=none, fill=none] (_) at (-0.15,0) {};

\begin{scope}
\draw (a)--(b)--(d)--(c)--(a)
(c)--(e);
\end{scope}

\draw[red] (e)--(b);
\end{tikzpicture}}}}$
		
		\vspace{0.2cm}
		
		$\Bigg\downarrow A$
		\hspace{6cm}
		$\Bigg\uparrow A$
		
		$\vcenter{\hbox{\scalebox{1.5}{\begin{tikzpicture}
	\tikzset{every node/.style={draw=black,fill=black, shape=circle, scale=0.4}}
	
	\node (a) at (0,1.7) {};
	\node (b) at (1.7,1.7) {};
	\node (c) at (-0,0) {};
	\node (d) at (1.7,0) {};
	\node (e) at (0.85,0.85) {};
	
	\node[draw=none, fill=none] (_) at (1.85,2) {};
	\node[draw=none, fill=none] (_) at (-0.15,0) {};
	
	\begin{scope}
		\draw (a)--(b)--(d)--(c)--(a);
		\draw[dashed] (e)--(a);
	\end{scope}
	
	\draw (a) edge[in=10, out = -10, looseness=3,red] (c);
\end{tikzpicture}}}} 
		\vpointer{C}
		\vcenter{\hbox{\scalebox{1.5}{\begin{tikzpicture}
	\tikzset{every node/.style={draw=black,fill=black, shape=circle, scale=0.4}}
	
	\node (a) at (0,1.7) {};
	\node (b) at (1.7,1.7) {};
	\node (c) at (-0,0) {};
	\node (d) at (1.7,0) {};
	\node (e) at (0.85,0.85) {};
	
	\node[draw=none, fill=none] (_) at (1.85,2) {};
	\node[draw=none, fill=none] (_) at (-0.15,0) {};
	
	\begin{scope}
		\draw (a)--(b)--(d)--(c)--(a);
		\draw[red] (e)--(c);
	\end{scope}
	
	\draw (a) edge[in=10, out = -10, looseness=3, dashed] (c);
\end{tikzpicture}}}}$
	\end{center}
	\caption{Rotation $B$ can be replaced by applications of $A$, $C$, and $A$ again.}
	\label{fig:B_equals_ACA}
\end{figure}

\subsection{Generating plane maps} 
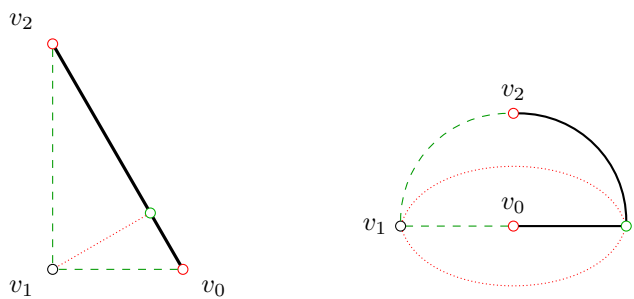
\begin{figure}
	\begin{center}
		\begin{tikzpicture}[scale = 3]
\tikzset{type0/.style={shape=circle, draw=red, scale=0.4, fill=white}}
\tikzset{type1/.style={shape=circle, draw=black!30!green, scale=0.4, fill=white}}
\tikzset{type2/.style={shape=circle, draw=black, scale=0.4, fill=white}}
\tikzset{noNode/.style={draw=none}}
\tikzset{0edge/.style={draw=red, densely dotted}}
\tikzset{1edge/.style={draw=black!40!green, dashed}}
\tikzset{2edge/.style={very thick}}

\node[type0] (v2) at (0, 1) {};
\node[type2] (v1) at (0, 0) {};
\node[type0] (v0) at ({tan(30)}, 0) {};
\node[type1] (e) at ({cos(30)*sin(30)},{sin(30)^2}) {};

\begin{scope}[2edge]
\draw (e) edge (v2);
\draw (e) edge (v0);
\end{scope}

\begin{scope}[1edge]
\draw (v0) edge (v1);
\draw (v2) edge (v1);
\end{scope}

\draw[red, densely dotted] (e) edge (v1);

\node[label=150:{$v_2$}] at (0, 1) {};
\node[label=190:$v_1$] at (0, 0) {};
\node[label={-10:$v_0$}] at ({tan(30)}, 0) {};
\end{tikzpicture} \qquad \qquad
		\begin{tikzpicture}[scale=2]
\tikzset{type0/.style={shape=circle, draw=red, scale=0.4, fill=white}}
\tikzset{type1/.style={shape=circle, draw=black!30!green, scale=0.4, fill=white}}
\tikzset{type2/.style={shape=circle, draw=black, scale=0.4, fill=white}}
\tikzset{t0/.style={draw=red, densely dotted}}
\tikzset{t1/.style={draw=black!40!green, dashed}}
\tikzset{t2/.style={thick}}

\node[type0, label={[label distance=-0.02cm]90:$v_0$}] (v0) at (0, 0) {};
\node[type2, label={[label distance=-0.02cm]-180:$v_1$}] (v1) at (-0.75, 0) {};
\node[type0, label={[label distance=-0.02cm]90:$v_2$}] (v2) at (0, 0.75) {};
\node[type1] (e) at (0.75, 0) {};

\begin{scope}[t0]
\draw (v1) edge[in=110, out = 70, looseness=0.9] (e);
\draw (v1) edge[in=-110, out = -70, looseness=0.9] (e);
\end{scope}

\begin{scope}[t1]
\draw (v0) -- (v1);
\draw (v1) edge[in=180, out = 90] (v2);
\end{scope}

\begin{scope}[t2]
\draw (v0) -- (e);
\draw (e) edge[in=0, out = 90] (v2);
\end{scope}

\end{tikzpicture}
	\end{center}
	\caption{The operation join as an lsp-operation and as a lopsp-operation.}
	\label{fig:join}
\end{figure}

In this short section we use the lopsp-operation join. This operation is given in Figure \ref{fig:join} as an lsp-operation (Definition~\ref{def:lsp}) and as a lopsp-operation. For any map $M$, $\text{join}(M)$ can be described as the barycentric subdivision of $M$ without the vertices of colour 1 and ignoring the colours. It is also known as the radial graph of $M$.

We can apply the lopsp-operation join to any plane map to get a plane quadrangulation. Given a quadrangulation, we can also determine to which map(s) join can be applied to get that quadrangulation. This connection between maps and quadrangulations is formalised in Lemma~\ref{lem:map_quad}.

\begin{lemma}\label{lem:map_quad}
	There is a bijection between the set of all plane maps and the set of all plane quadrangulations with a fixed bipartition class.
\end{lemma}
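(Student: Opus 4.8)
Proof proposal.

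\noindent The plan is to realise the claimed bijection explicitly through the lopsp-operation join (equivalently, the radial graph) and a concrete inverse, and then to verify that the two constructions compose to the identity. First I would pin down conventions: a plane quadrangulation $Q$ is connected and bipartite, so its vertex bipartition into two classes is unique up to swapping them, and ``a plane quadrangulation with a fixed bipartition class'' means a pair $(Q,A)$ where $A$ is one of the two classes, taken up to isomorphisms of $Q$ preserving $A$.

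For the forward direction, send a plane map $M$ to $\Phi(M) := (\mathrm{join}(M), V_M)$. As recalled before the statement, $\mathrm{join}(M)$ is the barycentric subdivision $B_M$ with the colour-$1$ vertices deleted; since no edge of $B_M$ joins two vertices of the same colour, its vertex set $V_M \cup F_M$ is a bipartition, and each of its faces is a quadrilateral, one per edge $e$ of $M$, obtained by merging the four flags around $e$ and having the two endpoints of $e$ as its opposite $V_M$-corners. Connectedness and planarity of $M$ give the same for $\mathrm{join}(M)$, so $\Phi(M)$ lies in the target set.

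For the inverse, given $(Q,A)$ with other class $B$, place inside every quadrilateral face of $Q$ a new vertex joined to all four corners of that face (counted with multiplicity, so the new vertex has degree $4$); call the result $T$, and colour its vertices with colour $0$ on $A$, colour $2$ on $B$, and colour $1$ on the new vertices. Then $T$ is a properly $3$-coloured triangulation of the sphere (each quadrilateral has been cut into four triangles, each carrying all three colours) in which every colour-$1$ vertex has degree $4$. I would then invoke --- or, if needed, prove --- the standard fact that such coloured triangulations are exactly the barycentric subdivisions of plane maps: from $T$ one recovers $M = \Psi(Q,A)$ with $V_M = A$, with edge set the colour-$1$ vertices (i.e.\ the faces of $Q$), with face set $B$, and with the rotation system read off from $T$. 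Intuitively, $M$ is drawn by replacing each colour-$1$ vertex of $T$ by the diagonal of its quadrilateral joining the two colour-$0$ corners, then deleting $Q$; the face of $M$ at $b \in B$ is bounded by the cyclic list of $A$-neighbours of $b$.

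It then remains to check $\Psi(\Phi(M), V_M) \cong M$ and $\Phi(\Psi(Q,A)) \cong (Q,A)$; both are bookkeeping on incidences and rotation systems. In $\mathrm{join}(M)$ the quadrilaterals are indexed by $E_M$ with the two endpoints of $e$ as its colour-$0$ corners, so inserting the central vertices and passing to $M$ returns the edges of $M$ with their incidences, and the cyclic order of quadrilaterals around a colour-$0$ vertex reproduces the rotation of $M$ there; conversely, forming $\mathrm{join}$ of $\Psi(Q,A)$ reinserts exactly the deleted colour-$1$ vertices of $T$ and recovers $Q$ with distinguished class $A$. The main obstacle is the reconstruction step: making precise that ``a properly $3$-coloured plane triangulation with all colour-$1$ vertices of degree $4$ is $B_M$ for a unique plane map $M$'', and checking it survives the degenerate cases --- loops in $M$ (a face of $Q$ with a repeated $A$-corner), small-degree vertices, parallel edges --- where $B_M$ and the auxiliary triangulation $T$ fail to be simple and the local flag pictures need care. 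A useful consistency check throughout is $|A| - |F_Q| + |B| = 2$, which follows from Euler's formula for $Q$ together with $|E_Q| = 2|F_Q|$ (every face has four sides, every edge lies on two faces) and matches $|V_M| - |E_M| + |F_M| = 2$.
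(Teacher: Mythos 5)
Your construction is essentially the paper's: the forward map is $M\mapsto(\mathrm{join}(M),V_M)$, and your inverse (one edge per face of $Q$ joining its two $A$-corners, with the rotation read off from the embedding) is exactly the paper's $\psi$, just made precise via an auxiliary coloured triangulation and the barycentric-subdivision characterisation. The paper simply states $\phi$ and $\psi$ and asserts they are mutually inverse, so your extra care about rotation systems and degenerate cases (loops, parallel edges) is additional detail rather than a different route.
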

\begin{proof}
	Let $\phi$ be the mapping that maps a map $M$ to the quadrangulation $\text{join}(M)$ with the bipartition class consisting of the original vertices of $M$.
	Let $\psi$ be the mapping that maps a quadrangulation $Q$ with a bipartition class $X$ to the map that has the vertices of $X$ as its vertices, and for each face of $Q$ the map $\psi(Q)$ has an edge between the two vertices of $X$ in that face.
	The mappings $\phi$ and $\psi$ are inverses, so they are bijections.
\end{proof}

With this bijection we can easily generate all plane maps with a given number $n$ of edges. First, generate all plane quadrangulations with $n+2$ vertices. These are exactly the quadrangulations with $n$ faces. Then check if the two colour classes are isomorphic, and use the bijection from Lemma \ref{lem:map_quad} to output one or two maps, depending on the number of non-isomorphic colour classes.

We used our program for generating quadrangulations to generate plane maps in this way, and compared the resulting counts to existing counts of plane maps. We used the general counts, counts of maps up to orientation-preserving automorphism and rooted counts found in \cite{rootedall, splitcounts, sumcounts}.

\section{Lopsp-operations}\label{sec:lopsp}
\subsection{From quadrangulations to lopsp-operations}
The predecoration of any lopsp-operation is an element of $\mathcal{Q}$. To generate all lopsp-operations of a given inflation factor $k$, we can therefore start by generating quadrangulations. Lemma \ref{lem:predeco_size} describes which quadrangulations must be generated.
\begin{lemma}\label{lem:predeco_size}
	Let $O$ be a lopsp-operation with inflation factor $k$. 
	\begin{itemize}
		\item If $k$ is even, then $v_1$ is not of colour 1 and the predecoration $Q(O)$ is a quadrangulation with $\frac{k+4}{2}$ vertices.
		\item If $k$ is odd, then $v_1$ is of colour 1 and the predecoration $Q(O)$ is a quadrangulation with $\frac{k+5}{2}$ vertices.
	\end{itemize}
\end{lemma}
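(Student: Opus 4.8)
The plan is to reduce everything to two quantities, namely the total number of vertices $|V_O|$ of $O$ and the number $n_1$ of vertices of colour $1$. The parity of the inflation factor $k$ will then follow from the degree restrictions on colour-$1$ vertices in Definition~\ref{def:lopsp}, and the vertex count of $Q(O)$ is immediate from Definition~\ref{def:predecoration} once $n_1$ is known.

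First I would pin down $|V_O|$ in terms of $k$. Since $O$ is a plane triangulation, summing the sizes of its faces gives $2|E_O| = 3|F_O|$, and together with Euler's formula $|V_O| - |E_O| + |F_O| = 2$ this yields $|F_O| = 2|V_O| - 4$. The chambers of $O$ are exactly its faces, so the inflation factor is $k = |F_O|/2 = |V_O| - 2$; that is, $|V_O| = k + 2$.

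The heart of the argument is a double count of the chamber-corners located at colour-$1$ vertices. Around any vertex $v$ the cyclic order of darts produces exactly $\deg(v)$ corners, each belonging to a single chamber, so the number of corners at colour-$1$ vertices equals $\sum_{c(v)=1}\deg(v)$. On the other hand, every chamber has exactly one vertex of each colour and hence contributes exactly one corner at a colour-$1$ vertex, so the same count equals $|F_O| = 2k$. Therefore $\sum_{c(v)=1}\deg(v) = 2k$. Now I distinguish the two cases of Definition~\ref{def:predecoration}. If $c(v_1)\neq 1$, then every colour-$1$ vertex has degree $4$, so $4n_1 = 2k$, which forces $k$ even and $n_1 = k/2$. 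If $c(v_1) = 1$, then $v_1$ has degree $2$ and all other colour-$1$ vertices have degree $4$, so $4(n_1 - 1) + 2 = 2k$, which forces $k$ odd and $n_1 = (k+1)/2$. Since these two cases are exhaustive and mutually exclusive, $k$ even implies $c(v_1)\neq 1$ and $n_1 = k/2$, while $k$ odd implies $c(v_1) = 1$ and $n_1 = (k+1)/2$.

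It remains to count the vertices of $Q(O)$, which is already known to be a plane quadrangulation by the remark following Definition~\ref{def:predecoration}. When $k$ is even, $Q(O)$ consists precisely of the colour-$0$ and colour-$2$ vertices of $O$, so $|V_{Q(O)}| = |V_O| - n_1 = (k+2) - k/2 = \frac{k+4}{2}$. When $k$ is odd, $Q(O)$ contains in addition the vertex $v_1$, so $|V_{Q(O)}| = |V_O| - n_1 + 1 = (k+2) - \frac{k+1}{2} + 1 = \frac{k+5}{2}$. I expect the only point needing care to be the double-counting step in the presence of parallel edges, i.e.\ checking that each chamber is genuinely a triangle with three distinct, differently coloured vertices so that it supplies exactly one colour-$1$ corner; this follows directly from the combinatorial definition of a map together with property~(2) of Definition~\ref{def:lopsp}, so no serious obstacle arises and the rest is bookkeeping with Euler's formula.
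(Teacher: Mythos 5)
Your proof is correct and follows essentially the same argument as the paper: both hinge on the double count that every chamber has exactly one colour-$1$ corner while colour-$1$ vertices have degree $4$ (or $2$ for $v_1$), which gives the parity of $k$. The only cosmetic difference is that you apply Euler's formula to $O$ and subtract the colour-$1$ vertices, whereas the paper applies it to $Q(O)$ directly via $|V_{Q(O)}|=|F_{Q(O)}|+2$.
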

\begin{proof}
	If $v_1$ is not of colour 1, then every vertex of colour 1 in $O$ has degree 4. As every face of $O$ has exactly one vertex of colour 1, it follows that $2\cdot k = |F_O| = 4|F_{Q(O)}|$ and therefore $k$ is even. If $v_1$ is of colour 1, then all vertices of colour 1 except $v_1$ have degree 4 and $v_1$ has degree 2. It follows that $2\cdot k =|F_O|= 4(|F_{Q(O)}| - 1) + 2 $ and therefore $k$ is odd.
	
	As $Q(O)$ is a quadrangulation, $4|F_{Q(O)}| = 2|E_{Q(O)}|$. It now follows from the Euler formula for $O(Q)$ that:
	
	\[ |V_{Q(O)}|= |E_{Q(O)}| - |F_{Q(O)}| + 2 = |F_{Q(O)}| + 2 =
	\begin{cases}
		\frac{k}{2} + 2  &= \frac{k + 4}{2}\quad \text{if $k$ is even}\\
		\frac{k-1}{2} + 1 + 2 &= \frac{k + 5}{2}\quad \text{if $k$ is odd}\\
	\end{cases}.\]
\end{proof}

Every quadrangulation is the predecoration of more than one lopsp-operation. To determine all the lopsp-operations that have a quadrangulation $Q$ as their predecoration we choose vertices $v_0$, $v_1$, and $v_2$ in $Q$ in every non-isomorphic way. In our program we have the automorphism group of $Q$, so that it is not difficult to determine which vertices are in different orbits under the automorphism group. If $k$ is odd, then it follows from Lemma \ref{lem:predeco_size} and the definition of a predecoration that $v_1$ must have degree 1 and its neighbour has colour 0. As $Q$ is bipartite, this determines the colours of all other vertices, so there is exactly one lopsp-operation for every choice of the $v_i$, considering that $v_1$ must have degree 1. If $k$ is even, then the colours are not determined yet. For each of the choices of $v_0$, $v_1$, and $v_2$ we then choose the colour of one vertex of the quadrangulation. It can be either 0 or 2, and both choices give rise to valid lopsp-operations that are different. The colours of every other vertex follow from this choice. This gives us all possible lopsp-operations with a certain inflation factor. 

\subsection{Connectivity}
A lopsp-operation is $\textbf{c}k$ if its `associated tiling' is $k$-connected and all faces have size at least $k$. We will not discuss the connection between lopsp-operations and tilings here. More details can be found in \cite{lopsp2021}. The definition of $\textbf{c}2$ and $\textbf{c}3$ operations we give in Definition \ref{def:2-3-connectivity} is equivalent to the one in \cite{lopsp2021}. The equivalence follows from results in that paper. 

\begin{definition}\label{def:2-3-connectivity}
	\begin{itemize}
		\item A lopsp-operation $O$ is $\textbf{c}2$ if for every cut-path $P$ of minimal length, there is no 2-cycle in $O_P$.
		\item A 4-cycle is non-trivial if it has a vertex of colour 0 on each side. A lopsp-operation $O$ is $\textbf{c}3$ if it is $\textbf{c}2$ and for every cut-path $P$ of minimal length, there is no non-trivial 4-cycle in a patch of two copies of $O_P$ sharing a copy of $P_{v_0,v_2}$ or two copies of $P_{v_0,v_1}$.
	\end{itemize}
\end{definition}

This definition cannot be checked immediately from the internal representation of a lopsp-operation in our program, as it relies on choosing cut-paths and cutting and gluing the lopsp-operation. Theorem \ref{thm:2conn_char} and Theorem \ref{thm:3conn_char} give characterisations of being $\textbf{c}2$ and $\textbf{c}3$ that can be checked knowing only the predecoration, $v_0$, $v_1$, and $v_2$, and whether $v_1$ is of colour 1.

In the following theorems we will use the concept of the `interior' of a face. Let $M'$ be a submap of a plane map $M$. Every vertex and edge of $M$ that is not in $M'$ is inside exactly one face of $M'$. The \emph{interior} of this face consists of all the vertices and edges of $M$ that are not in $M'$ that are inside the face.  

\begin{theorem}\label{thm:2conn_char}
	A lopsp-operation $O$ is $\textbf{c}2$ if and only if every 2-cycle in $Q(O)$ has one of the $v_i$ on each side.
\end{theorem}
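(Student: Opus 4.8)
The plan is to work directly with the topological characterisation from Definition~\ref{def:2-3-connectivity}: $O$ is $\textbf{c}2$ exactly when a minimal-length cut-path $P$ produces an $O_P$ with no $2$-cycle. I would therefore translate $2$-cycles of $O_P$ and $2$-cycles of $Q(O)$ into one another. Recall that $Q(O)$ is obtained from $O$ by keeping all vertices of colours $0$ and $2$ together with the edges of colour $1$ (plus possibly $v_1$ and one edge, if $c(v_1)=1$). Since $O$ is a triangulation with no monochromatic edges, every edge of colour $1$ of $O$ joins a $0$-vertex and a $2$-vertex, so the $2$-cycles of $Q(O)$ are precisely the pairs of parallel colour-$1$ edges of $O$. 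The key observation to establish first is that a $2$-cycle of $O_P$ either comes from a $2$-cycle of $O$ itself, or is created by the cutting: the two copies of some vertex on $P$ (most relevantly the two copies $v_{0,L},v_{0,R}$ of $v_0$) may become joined by two parallel edges in $O_P$ that were a single ``near-loop'' structure through the cut-path in $O$. So the proof naturally splits into the ``only if'' and ``if'' directions, each handled by chasing where such $2$-cycles can sit.

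For the ``if'' direction I would argue the contrapositive: suppose $O$ is not $\textbf{c}2$, so some minimal cut-path $P$ gives a $2$-cycle $c$ in $O_P$. First dispose of the case where $c$ already lives in $O$ (i.e.\ $c$ uses no edge arising from duplicating $P$): then $c$ is a $2$-cycle of the triangulation $O$ bounding a disc, and because $O$ is $3$-coloured with no monochromatic edges a short parity/degree argument (using the degree conditions in Definition~\ref{def:lopsp} — interior colour-$1$ vertices have degree $4$, etc.) shows that after intersecting with $Q(O)$ one gets a $2$-cycle of $Q(O)$ with both its sides free of some $v_i$; I would spell out the small finite analysis of which colour $c$ can have and what sits inside it. The remaining case is that $c$ uses a duplicated edge, which forces $c$ to pass through one of $v_{0,L}, v_{0,R}$; since $P$ has minimal length, such a short return path around $v_0$ again produces, inside $O$, a $2$-cycle in $Q(O)$ not separating the $v_i$. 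Either way we exhibit a bad $2$-cycle in $Q(O)$.

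For the ``only if'' direction, assume $Q(O)$ has a $2$-cycle $c=\{e_1,e_2\}$ (two parallel colour-$1$ edges between a $0$-vertex $x$ and a $2$-vertex $y$) with the same $v_i$-free side property failing, i.e.\ one of the two discs bounded by $c$ in $Q(O)$ contains none of $v_0,v_1,v_2$ in its interior or on $c$. I would then choose a minimal cut-path $P$ that avoids the interior of that disc (possible precisely because no $v_i$ is trapped there, so $P$ can be routed around it), and show that the two colour-$1$ edges $e_1,e_2$, together with whatever colour-$0$ structure of $O$ sits between them in that disc, survive in $O_P$ as a $2$-cycle — using that the edges of $O$ removed in passing to $Q(O)$ are exactly the colour-$0$ edges and that $O_P$ contains all colour-$1$ edges of $O$ unduplicated when $P$ misses the relevant region. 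Hence $O$ is not $\textbf{c}2$.

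I expect the main obstacle to be the bookkeeping in the ``only if'' direction: one must be careful that a minimal-length cut-path can indeed be chosen to avoid the prescribed disc, and that avoiding it does not accidentally create a new $2$-cycle elsewhere while preserving the intended one — this is where the minimality of $P$ and the precise statement that $O_P$ is obtained by cutting along $P$ (so only vertices/edges on $P$ get duplicated) must be used with care. The colour/degree constraints from Definition~\ref{def:lopsp} and the fact that $O$ is $2$-connected are the tools that keep this finite and force the $2$-cycles of $Q(O)$ and of $O_P$ into the tight correspondence the theorem asserts.
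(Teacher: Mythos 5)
Your overall strategy coincides with the paper's: match 2-cycles of $O_P$ with 2-cycles of $O$ and then with colour-1 2-cycles of $Q(O)$ in one direction, and in the other direction arrange the cut-path so that a bad 2-cycle of $Q(O)$ survives the cutting. But in both directions the proposal stops exactly where the work is. In the direction ``not $\textbf{c}2$ $\Rightarrow$ bad 2-cycle in $Q(O)$'', two points need repair. (a) Your extra case of a 2-cycle ``created by the cutting'' through $v_{0,L},v_{0,R}$ is vacuous: an edge of $O_P$ joining the two copies of $v_0$ would project to a loop at $v_0$, and $O$ is properly 3-coloured, so it has no loops; every 2-cycle of $O_P$ therefore comes from a genuine 2-cycle $c_O$ of $O$. (b) The parallel edges of $c_O$ have the same endpoints and hence the same edge-colour; if that colour is 0 or 2, ``intersecting with $Q(O)$'' yields nothing, since $Q(O)$ keeps only the colour-1 edges. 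The missing argument is how to manufacture a colour-1 2-cycle from such a $c_O$: its colour-1 vertex $x$ is none of the $v_i$, has degree 4, and so has exactly one neighbour $v$ inside and one neighbour $w$ outside $c_O$; as $O$ is a triangulation both are joined to the other vertex $y$ of $c_O$, and one of the pairs $\{v,y\}$, $\{w,y\}$ carries two parallel colour-1 edges forming the required 2-cycle with no $v_i$ on one side (the degenerate case where both $v$ and $w$ have a single edge to $y$ forces $|V_O|=4$ and is excluded). This is the core of the paper's proof and is only a promissory note in yours.

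In the converse direction, the obstacle you flag --- whether a minimal cut-path can be chosen to avoid the interior of the offending disc --- resolves itself, and the paper's rerouting device is the way to see it: a cut-path entering the interior must do so along a subpath joining the two vertices $x,y$ of the 2-cycle (a path cannot cross the boundary except at $x$ or $y$, and cannot revisit a vertex), and that subpath has length at least 2, so replacing it by one edge of the 2-cycle gives a strictly shorter cut-path. Hence any minimal cut-path already misses the interior, both colour-1 edges attach to the same copies of $x$ and $y$ in $O_P$, and the 2-cycle survives. So your plan is viable and is essentially the paper's plan, but as written it identifies the two real difficulties without resolving either of them.
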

\begin{proof}
	Assume first that there is a 2-cycle in the predecoration that does not have any of the $v_i$ on one side. Choose a cut-path $P$ in $O$. If the path passes through the interior of the 2-cycle, replace that part of $P$ by one of the edges of the 2-cycle. Let $P'$ be the resulting cut-path. The 2-cycle in $O$ now induces a 2-cycle in $O_{P'}$, so that $O$ is not $\textbf{c}2$ by Definition \ref{def:2-3-connectivity}.
	
	Now assume that $O$ is not $\textbf{c}2$, and that every 2-cycle in the predecoration has one of the $v_i$ on each side. There is a 2-cycle $c$ in $O_P$ for some cut-path $P$. This cycle induces a 2-cycle $c_O$ in $O$ that has none of the $v_i$ on one side, which we will call the inside. The other side is the outside. If $c_O$ is of colour 1, then it is also in the predecoration and we are done. Assume that $c_O$ is not of colour 1. Then one of its vertices, say $x$, is of colour 1. Let $y$ be the other vertex of $c_O$. As $v_0$ and $v_2$ are not of colour 1, $x$ is not one of those vertices. If $x$ would be $v_1$ then it would have degree 2 and it would therefore have two different neighbours of different colours, which is also not the case. Therefore $x$ is not one of the $v_i$. On every side of $c_O$ there is a neighbour of $x$. Let $v$ be the neighbour on the inside and let $w$ be the neighbour on the outside. As $O$ is a triangulation, $v$ and $w$ must be adjacent to $y$. If one of them, say $v$, has only one edge to $y$, then $v$ is the only vertex on that side of $c_O$ and it has degree 2. If that is the case for both $v$ and $w$, then $O$ has only four vertices. At most two of them can be $v_i$ as $x$ and $v$ are not, a contradiction. It follows that $v,y$ is a 2-cycle on the inside of $c_O$ or $w,y$ is a 2-cycle on the outside. In either case there is a 2-cycle of colour 1 that has none of the $v_i$ on one side, as neither $x$ nor any vertex on the inside of $c_O$ is one of the $v_i$, a contradiction. 
\end{proof}

\begin{theorem}\label{thm:3conn_char}
	A lopsp-operation $O$ is $\textbf{c}3$ if and only if for every submap $H$ of $Q(O)$ the following holds:
	\begin{itemize}
		\item Every face of $H$ of size 2 has either $v_0$ or $v_2$ in its interior, or $v_1$ is of colour 1 and it is the only vertex in the interior of that face.
		\item Every non-empty face of $H$ of size 4 has $v_0$, $v_1$ or $v_2$ in its interior.
	\end{itemize}
\end{theorem}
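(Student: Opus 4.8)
The plan is to imitate the structure of the proof of Theorem~\ref{thm:2conn_char}, upgrading everything from $2$-cycles to $4$-cycles and from ``cut-path $P$'' to ``two copies of $O_P$ glued along $P_{v_0,v_2}$ or along two copies of $P_{v_0,v_1}$''. Throughout, fix the setup: $O$ is a lopsp-operation, $Q(O)$ its predecoration (a plane quadrangulation), and recall from Theorem~\ref{thm:2conn_char} that $\textbf{c}2$ is already characterised by ``every $2$-cycle of $Q(O)$ has a $v_i$ on each side.'' Since $\textbf{c}3$ requires $\textbf{c}2$, I would first observe that the condition on size-$2$ faces of submaps $H$ is exactly a reformulation of the $\textbf{c}2$ condition of Theorem~\ref{thm:2conn_char}: a size-$2$ face of a submap $H$ of $Q(O)$ is bounded by a $2$-cycle of $Q(O)$, and conversely every $2$-cycle of $Q(O)$ bounds a size-$2$ face of the submap consisting of just that $2$-cycle; the interior of that face is the set of vertices/edges of $Q(O)$ on the inside, so ``has $v_0$ or $v_2$ in its interior, or $v_1$ of colour $1$ is the only interior vertex'' says precisely that the inside is not free of all $v_i$ in the relevant sense. (The clause about $v_1$ being the unique interior vertex accounts for the case $c(v_1)=1$, where $v_1$ is added to $Q(O)$ as a degree-$1$ vertex and so sits alone inside a $2$-cycle.) So the real content is the second bullet, characterising when a $\textbf{c}2$ operation fails to be $\textbf{c}3$ by the presence of a ``bad'' size-$4$ face of some submap.

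For the ($\Leftarrow$) direction I would argue the contrapositive: suppose $O$ is $\textbf{c}2$ but not $\textbf{c}3$. By Definition~\ref{def:2-3-connectivity} there is a minimal cut-path $P$ and a non-trivial $4$-cycle $c$ (one with a colour-$0$ vertex on each side) living in a patch $D$ formed by two copies of $O_P$ sharing a copy of $P_{v_0,v_2}$ or of $P_{v_0,v_1}$. The patch $D$ is itself a plane map obtained by gluing, and there is a natural map $D\to O$ collapsing the two copies back onto $O$; the cycle $c$ maps to a closed walk of length $4$ in $O$. Arguing as in Theorem~\ref{thm:2conn_char}, I would first push $c$ to be a genuine $4$-cycle $c_O$ in $O$ whose inside (the side not containing the $v_i$'s, which exists because the patch's exterior copies of $P$ contain all the $v_i$) contains no $v_i$. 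Then, exactly as in the $\textbf{c}2$ proof, I would eliminate colour-$1$ vertices on $c_O$ one at a time: if $x$ on $c_O$ has colour $1$ (so $x\neq v_i$ since $c(v_0),c(v_2)\neq 1$ and $v_1$ of colour $1$ has degree $2$ with two distinct neighbours), then $x$ has a neighbour $v$ on the inside; since $O$ is a triangulation, $v$ is adjacent to both neighbours of $x$ along $c_O$, and one replaces the two edges of $c_O$ at $x$ by the two edges from $v$, obtaining a shorter cycle on the inside or a new $4$-cycle with one fewer colour-$1$ vertex on it, still non-trivial (the colour-$0$ vertex of $c_O$ on the inside is retained) and still with no $v_i$ inside. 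Iterating, I reach a $4$-cycle all of whose vertices have colour $0$ or $2$ — hence a $4$-cycle in the submap $Q(O)$ of $O$ — which is non-trivial and has no $v_i$ in its (non-empty, since $c$ was non-trivial) interior, so the second bullet fails for $H=$ that $4$-cycle. The delicate points here are (a) checking that reducing at a colour-$1$ vertex really keeps the cycle non-trivial, which needs that the inside colour-$0$ vertex is not the one being cut away, and the degenerate small cases (few vertices inside) as handled in the $\textbf{c}2$ proof, and (b) making sure the minimal-length hypothesis on $P$ is used where needed so that the lengths really are $2$ and $4$ and not larger.

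For the ($\Rightarrow$) direction, again contrapositive: suppose some submap $H$ of $Q(O)$ has a non-empty size-$4$ face $f$ with no $v_i$ in its interior (the size-$2$ case being already covered by $\textbf{c}2$). The boundary of $f$ is a closed walk of length $4$ in $Q(O)\subseteq O$, and since $H$ is a submap of the quadrangulation $Q(O)$ whose face $f$ is non-empty, the boundary is a genuine $4$-cycle $c$ with all four vertices of colour $0$ or $2$ alternating; its interior contains no $v_i$. I would then choose a minimal cut-path $P$ avoiding the interior of $c$ (possible: re-route $P$ along the boundary $4$-cycle wherever it enters, which does not increase its length since $c$ has only four edges and one can shortcut through at most two of them — this is where I expect to have to be careful to preserve minimality, using that a minimal cut-path cannot be forced to detour through the interior). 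With $P$ disjoint from the interior of $c$, the $4$-cycle $c$ survives intact in $O_P$ and, choosing the patch of two copies of $O_P$ glued along the appropriate copy of $P_{v_0,v_2}$ or $P_{v_0,v_1}$ so that $c$ lies in one copy, $c$ is a non-trivial $4$-cycle there (non-trivial because its interior is non-empty and contains a colour-$0$ vertex on the inside, while the outside contains the copies of $P$ hence colour-$0$ vertices like $v_0$). That contradicts $\textbf{c}3$.

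The main obstacle I anticipate is the cut-path surgery in both directions: showing that one may assume the minimal cut-path $P$ avoids the interior of the relevant $2$- or $4$-cycle without lengthening $P$, and dually that a bad cycle in a patch can be normalised to a bona fide short cycle in $O$ itself. This is precisely the kind of ``re-route along the boundary'' argument used in Theorem~\ref{thm:2conn_char}, but the bookkeeping for $4$-cycles in a two-copy patch — tracking which copy of $P_{v_0,v_1}$ or $P_{v_0,v_2}$ the cycle crosses, and confirming non-triviality survives the normalisation — is more involved and is where the proof will spend most of its effort.
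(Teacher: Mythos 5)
Your overall architecture (contrapositive in both directions, projecting patch cycles down to $Q(O)$ and lifting bad faces of submaps up to patches) matches the paper's, but there is a genuine gap at the heart of the proposal: the first bullet is \emph{not} a reformulation of the $\textbf{c}2$ condition of Theorem~\ref{thm:2conn_char}, and treating it as such breaks both directions. A $2$-cycle of $Q(O)$ whose interior contains $v_1$ together with at least one further vertex (and neither $v_0$ nor $v_2$) passes the $\textbf{c}2$ test --- it has a $v_i$ on each side --- yet violates the first bullet, and such an $O$ really is not $\textbf{c}3$: the $2$-cycle lifts to a non-trivial $4$-cycle in the patch of two copies of $O_P$ sharing the copy of $v_1$ (Figure~\ref{fig:2-cycle_to_4-cycle}). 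Your ``only if'' direction never produces this witness, and, dually, your ``if'' direction cannot close: when you project a non-trivial $4$-cycle of a patch back to $O$, the resulting closed walk of length $4$ need not be a $4$-cycle --- if it encloses the shared copy of $v_1$, it can be (or can be replaced by) its own reflection under the point symmetry of the patch, and then it covers a $2$-cycle of $Q(O)$ twice. That is exactly the case the paper settles with the reflection argument of Figure~\ref{fig:4-cycles_aroundv1} and a contradiction with the \emph{first} bullet, not the second; your step ``push $c$ to be a genuine $4$-cycle $c_O$ in $O$'' silently excludes it.

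The second gap is the cut-path surgery in the ``only if'' direction for a bad size-$4$ face $f$. You propose to re-route a minimal cut-path along the boundary of $f$ so that it avoids the interior; but when the crossing subpath $P_e$ enters and leaves $f$ at \emph{opposite} (non-adjacent) vertices $x,y$ of the $4$-cycle, no such re-routing is guaranteed: the detour through a third boundary vertex need not yield a simple path, and minimality of $P$ is not obviously preserved or exploited. The paper does not re-route in this case. It keeps $P$ as it is, observes that $P_e$ then lies entirely in $P_{v_0,v_1}$ or entirely in $P_{v_0,v_2}$ and is the only part of $P$ inside $f$, and chooses the patch $X$ to be two copies of $O_P$ glued along the side containing $P_e$, so that the two halves of the interior of $f$ are reassembled and $f$ induces a non-trivial $4$-cycle in $X$ with interior isomorphic to that of $f$. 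Without this gluing step, and without the symmetric treatment of the first bullet described above, the proposal does not go through.
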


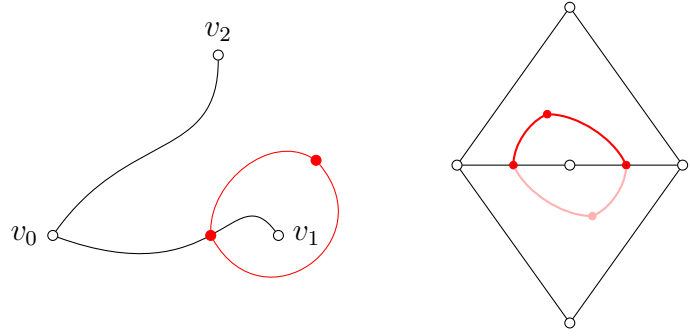
\begin{figure}
\begin{center}
		\begin{tikzpicture}[scale=1]

\tikzset{P/.style={shape=circle, draw=black, scale=0.4, fill=white}}
\tikzset{c/.style={shape=circle, draw=red, scale=0.4, fill=red}}

\coordinate (v0coo) at (-3,0);
\coordinate (v1coo) at (0,0);
\coordinate (v2coo) at (-0.8,2.4);
\coordinate (xcoo) at (-0.9,0);
\coordinate (ycoo) at (0.5,1);

\node[P,label=180:\large$v_0$] (v0) at (v0coo) {};
\node[P,label=0:\large$v_1$] (v1) at (v1coo) {};
\node[P,label=90:\large$v_2$] (v2) at (v2coo) {};

\begin{scope}[P]
\draw (v1) .. controls (-1.5,2) and (-2, -2) .. (v0);
\draw (v2) .. controls (-2,2.5) and (-5, 3.5) .. (v0);
\end{scope}

\node[c] (x) at (xcoo) {};
\node[c] (y) at (ycoo) {};

\draw[red] (x) edge[out = 90, in = 150] (y);
\draw[red] (x) edge[out = -60, in = -50, looseness = 2] (y);

\end{tikzpicture} \qquad 
	\begin{tikzpicture}[scale=3]
\tikzset{normal/.style={shape=circle, draw=black, scale=0.4, fill=white}}
\tikzset{P/.style={shape=circle, draw=red, scale=0.3, fill=red}}
\tikzset{P'/.style={shape=circle, draw=red!30!white, scale=0.3, fill=red!30!white}}

\node[normal] (v1) at (0, 0) {};
\node[normal] (v2) at (0, 0.7) {};
\node[normal] (v2') at (0, -0.7) {};
\node[normal] (v0) at (-0.5, 0) {};
\node[normal] (v0') at (0.5, 0) {};

\node[P] (x) at (-0.25,0) {};
\node[P] (y) at (0.25,0) {};
\node[P] (z) at (-0.1,0.226) {};
\node[P'] (z') at (0.1,-0.226) {};

\begin{scope}
\draw (v0) -- (v2) -- (v0') -- (v2') -- (v0) --(x) -- (x) -- (v1) -- (y) -- (y) -- (v0');
\end{scope}

\begin{scope}[thick]
\draw[red] (x) edge[bend left, looseness=0.8] (z);
\draw[red] (z) edge[bend left, looseness=0.8] (y);
\draw[red!30!white] (x) edge[bend right, looseness=0.8] (z');
\draw[red!30!white] (z') edge[bend right, looseness=0.8] (y);
\end{scope}

\end{tikzpicture}
\end{center}
	\caption{On the left, a schematic representation of a lopsp-operation is shown. The black lines represent a cut-path. A 2-cycle is shown in red. On the right, two double chambers sharing their copy of $v_1$ are shown for the lopsp-operation on the left with the given cut-path. The 2-cycle in the lopsp-operation induces a 4-cycle in the two double chambers.}
	\label{fig:2-cycle_to_4-cycle}
\end{figure}
\begin{proof}
	Assume that there is a submap $H$ of $Q(O)$ for which the condition described in the theorem does not hold. There are two cases:
	\begin{itemize}
		\item $H$ has a face $f$ of size 2 that does not contain $v_0$ or $v_2$ in its interior and if it contains $v_1$ then there is at least one other vertex in the interior of $f$. It follows immediately that if the interior of $f$ contains none of the $v_i$ --- which is not possible if $H$ is just two vertices and one edge ---, then by Theorem \ref{thm:2conn_char}, $O$ is not $\textbf{c}2$ and therefore not $\textbf{c}3$. We can therefore assume that the interior of $f$ contains $v_1$ and at least one other vertex. Let $P$ be a cut-path of $O$. The face $f$ of size 2 induces a cycle of length 4 in two copies of $O_P$ sharing $v_1$, as shown in Figure~\ref{fig:2-cycle_to_4-cycle}. As there is a vertex in $f$ that is not of colour 1, there is also such a vertex in the interior of the cycle in two copies of $O_P$. It follows that $O$ is not $\textbf{c}3$.
		
		\item $H$ has a non-empty face $f$ of size 4 that contains none of the $v_i$. Let $P$ be a cut-path of $O$. If there are no edges of $P$ in the interior of $f$, then there is either a 2-cycle in $O_P$ or a non-trivial 4-cycle. In both cases $O$ is not $\textbf{c}3$. Assume that there is an edge $e$ of $P$ in the interior of $f$. As $v_1$ and $v_2$ are not in the interior of $f$, the path $P$ must cross over the boundary of $f$ at least twice. Let $P_e$ be the subpath of $P$ that contains $e$ such that only the endpoints $x$ and $y$ of $P_e$ are in the boundary of $f$. Every other vertex of $P_e$ is in the interior of $f$ and is therefore different from $v_0$, $v_1$, and $v_2$. If $x$ and $y$ are adjacent in $H$, we can replace $P_e$ in $P$ by that edge. The resulting path is still a cut-path and contains fewer edges in the interior of $f$ than $P$. If every edge in the interior of $f$ can be removed in this way, we find a new cut-path that has no edges in the interior of $f$ and by the previous argument that implies that $O$ is not $\textbf{c}3$. We can now assume that $x$ and $y$ are not adjacent in $H$. By the planarity of $O$ and the fact that $f$ has only length 4, it follows that there are no edges of $P\setminus P_e$ in the interior of $f$. Note that as $v_0$ is not in the interior of $f$, the path $P_e$ is either completely contained in $P_{v_0,v_1}$ or in $P_{v_0,v_2}$. Let $X$ be two copies of $O_P$ glued together such that the subpath containing $P_e$ is on the glued side. As there can be no 2-cycles in $O_P$, $f$ induces a 4-cycle in $X$ whose interior is isomorphic to that of $f$. The two vertices corresponding to $x$ and $y$ are on the glued side. The other two vertices are adjacent to both $x$ and $y$ and are therefore each in one of the copies of $O_P$. It follows that $O$ is not $\textbf{c}3$.
	\end{itemize}

	Conversely, assume that $O$ is not $\textbf{c}3$. Let $X$ be the patch of two copies of $O_P$ that contains a non-trivial 4-cycle for a cut-path $P$ of minimal length. Let $W$ be the facial walk of the non-empty face of the 4-cycle in $X$. We say that the boundary of the only face in $X$ that is not a triangle is the boundary of $X$, also written as $\partial X$. 
	Assume that there is a vertex $x$ of colour 1 in $W$. Its neighbours in $W$ have the same colour, as otherwise there would be an edge between its neighbours and the inside of $W$ would be two chambers sharing an edge so $W$ would be trivial. Assume that $x$ is in $\partial X$. As $x$ has degree 4 and both neighbours of $x$ in $W$ have the same colour and are in $X$, at least one of them is in $\partial X$. If the other neighbour of $x$ in $\partial X$ is not in $W$, then $P$ would not be of minimal length. The part of $P$ corresponding to the path of length 2 in $\partial X$ between the two neighbours of $x$ could be replaced by one edge. 
	It follows that the neighbours of $x$ in $W$ are in $\partial X$ as well. As $W$ only has length 4, it is in only one copy of $O_P$. We can let $X$ be the patch consisting of both copies of $O_P$ that contain $x$. The walk $W$ is contained in that patch and $x$ is not on the boundary.
	
	This proves that we can assume that $x$ is not in $\partial X$. 
	Then all neighbours of $x$ are in the patch, also the neighbour of $x$ that is on the outside of $W$. Replace $x$ in $W$ by this neighbour. This new walk also has length 4, is non-trivial, and it has one fewer vertex of colour 1. If there is another vertex of colour 1 in $W$, repeat this argument to get a walk $W_1$ of colour 1. 
	
	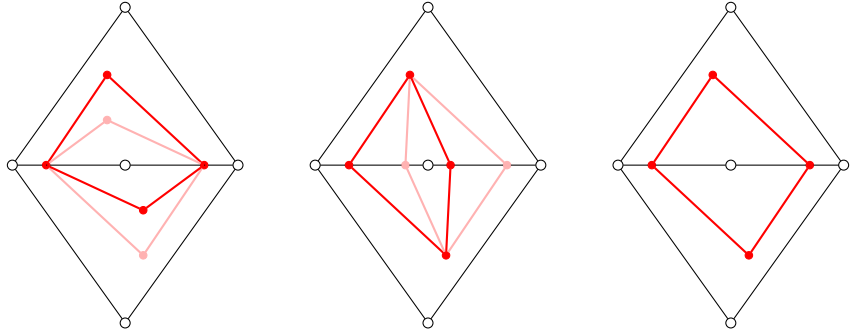
\begin{figure}
		\begin{center}
			\begin{tikzpicture}[scale=3]
	
	\tikzset{normal/.style={shape=circle, draw=black, scale=0.4, fill=white}}
	\tikzset{P/.style={shape=circle, draw=red, scale=0.3, fill=red}}
	\tikzset{P'/.style={shape=circle, draw=red!30!white, scale=0.3, fill=red!30!white}}
	
	\node[normal] (v1) at (0, 0) {};
	\node[normal] (v2) at (0, 0.7) {};
	\node[normal] (v2') at (0, -0.7) {};
	\node[normal] (v0) at (-0.5, 0) {};
	\node[normal] (v0') at (0.5, 0) {};
	
	\node[P] (x) at (-0.35,0) {};
	\node[P] (y) at (0.35,0) {};
	\node[P] (z) at (-0.08,0.4) {};
	\node[P'] (z') at (0.08,-0.4) {};
	\node[P] (w) at (0.08,-0.2) {};
	\node[P'] (w') at (-0.08,0.2) {};

	\begin{scope}
		\draw (v0) -- (v2) -- (v0') -- (v2') -- (v0) --(x) -- (x) -- (v1) -- (y) -- (y) -- (v0');
	\end{scope}
	
	\begin{scope}[thick]
		\draw[red] (x) edge[] (z);
		\draw[red] (z) edge[] (y);
		\draw[red!30!white] (x) edge[] (z');
		\draw[red!30!white] (z') edge[] (y);
		\draw[red] (x) edge[] (w);
		\draw[red] (w) edge[] (y);
		\draw[red!30!white] (x) edge[] (w');
		\draw[red!30!white] (w') edge[] (y);
	\end{scope}

\end{tikzpicture} \qquad 
			\begin{tikzpicture}[scale=3]
	\tikzset{normal/.style={shape=circle, draw=black, scale=0.4, fill=white}}
	\tikzset{P/.style={shape=circle, draw=red, scale=0.3, fill=red}}
	\tikzset{P'/.style={shape=circle, draw=red!30!white, scale=0.3, fill=red!30!white}}
	
	\node[normal] (v1) at (0, 0) {};
	\node[normal] (v2) at (0, 0.7) {};
	\node[normal] (v2') at (0, -0.7) {};
	\node[normal] (v0) at (-0.5, 0) {};
	\node[normal] (v0') at (0.5, 0) {};
	
	\node[P] (x) at (-0.35,0) {};
	\node[P'] (y) at (0.35,0) {};
	\node[P] (z) at (-0.08,0.4) {};
	\node[P] (z') at (0.08,-0.4) {};
	\node[P] (w) at (0.1,0) {};
	\node[P'] (w') at (-0.1,0) {};
	
	\begin{scope}
		\draw (v0) -- (v2) -- (v0') -- (v2') -- (v0) --(x) -- (w') -- (v1) -- (w) -- (y) -- (v0');
	\end{scope}
	
	\begin{scope}[thick]
		\draw[red] (x) edge[] (z);
		\draw[red!30!white] (z) edge[] (y);
		\draw[red] (x) edge[] (z');
		\draw[red!30!white] (z') edge[] (y);
		\draw[red] (z) edge[] (w);
		\draw[red!30!white] (z) edge[] (w');
		\draw[red] (z') edge[] (w);
		\draw[red!30!white] (z') edge[] (w');
	\end{scope}

\end{tikzpicture} \qquad
			\begin{tikzpicture}[scale=3]
	\tikzset{normal/.style={shape=circle, draw=black, scale=0.4, fill=white}}
	\tikzset{P/.style={shape=circle, draw=red, scale=0.3, fill=red}}

	\node[normal] (v1) at (0, 0) {};
	\node[normal] (v2) at (0, 0.7) {};
	\node[normal] (v2') at (0, -0.7) {};
	\node[normal] (v0) at (-0.5, 0) {};
	\node[normal] (v0') at (0.5, 0) {};
	
	\node[P] (x) at (-0.35,0) {};
	\node[P] (y) at (0.35,0) {};
	\node[P] (z) at (-0.08,0.4) {};
	\node[P] (z') at (0.08,-0.4) {};
	
	\begin{scope}
		\draw (v0) -- (v2) -- (v0') -- (v2') -- (v0) --(x) -- (v1) -- (y) -- (v0');
	\end{scope}
	
	\begin{scope}[thick]
		\draw[red] (x) edge[] (z);
		\draw[red] (z) edge[] (y);
		\draw[red] (x) edge[] (z');
		\draw[red] (z') edge[] (y);
	\end{scope}

\end{tikzpicture}
		\end{center}
		\caption{This figure shows the three possibilities for a 4-cycle around $v_1$ in two copies of a double chamber patch. The pink cycle shows the `reflection' of the red cycle. In the third case, the cycle is its own reflection.}
		\label{fig:4-cycles_aroundv1}
	\end{figure}

	The walk $W_1$ induces a closed walk $W_O$ of colour 1 in $O$. If $v_1$ is not on the inside of $W_1$, then $W_O$ is a non-trivial plane walk that has none of the $v_i$ on the inside. This is a contradiction with (ii). It follows that $v_1$ is on the inside of $W_1$, and as $W_1$ is non-trivial, it is either not of colour 1, or it is not the only vertex on the inside. As $X$ consists of two copies of $O_P$ sharing their copy of $v_1$, $X$ is point symmetric with respect to $v_1$. We can therefore consider the walk $W_1'$ in $X$ that is the `reflection' of $W_1$. The walks $W_1$ and $W_1'$ are either the same walk or they intersect in two vertices, as shown in Figure~\ref{fig:4-cycles_aroundv1}. If they are not the same walk, then we can choose a different walk consisting of edges of $W_1$ and $W_1'$ that has $v_1$ on the inside, is non-trivial, and is the same walk as its reflection. We can therefore assume that $W_1$ and $W_1'$ are the same walk. This walk induces a closed walk of length 2 in $O$ that has $v_1$ on the inside and either $v_1$ is not of colour 1 or there is at least one other vertex inside the walk. This is a contradiction with (i) as the walk is of colour 1 and therefore it is in the predecoration.
\end{proof}

\subsection{Lsp-operations}
Some lopsp-operations do not only preserve the orientation-preserving symmetries of maps, but also the orientation-reversing symmetries. These operations can be described as local symmetry-preserving operations (Definition~\ref{def:lsp}). Many well-known operations on polyhedra, such as truncation, ambo, and dual are lsp-operations. 

\begin{definition}\label{def:lsp}
	Let $O$ be a 2-connected plane map with vertex set $V$, together with a colouring $c: V \rightarrow \{0,1,2\}$. One of the faces is called the outer face. This face contains three special vertices marked as
	$v_0$, $v_1$, and $v_2$. We say that a vertex $v$ has \emph{colour} $i$ if $c(v)=i$. 
	This 3-coloured map $O$ is a \emph{local symmetry preserving operation}, lsp-operation for short, if the following properties hold:
	\begin{enumerate}
		\item Every inner face --- i.e.\ every face that is not the outer face --- is a triangle.
		\item There are no edges between vertices of the same colour.
		\item For each vertex that is not in the outer face:
		\begin{align*}
			c(v)=1 &\Rightarrow deg(v)=4\\
		\end{align*}
		For each vertex $v$ in the outer face, different from $v_0$, $v_1$, and $v_2$:
		\begin{align*}
			c(v)=1 &\Rightarrow deg(v)=3\\
		\end{align*}
		and
		\[c(v_0),c(v_2)\neq 1\]
		\begin{align*}
			c(v_1)=1 &\Rightarrow deg(v_1)=2\\
		\end{align*}
	\end{enumerate}
\end{definition}

An lsp-operation is applied similarly to a lopsp-operation, except that there is no need to choose a cut-path, and instead of gluing copies into $B_M$ with the edges of colour 0 removed, copies of $O$ or the mirror image of $O$ are glued into $B_M$. To get the lopsp-operation corresponding to an lsp-operation, a mirrored copy of the lsp-operation is glued into the outer face, as proved in \cite{lopsp2021}. A lopsp-operation $O$ can therefore be written as an lsp-operation if and only if there is a cycle in $O$ that contains $v_0$, $v_1$, and $v_2$ and one side of the cycle is the mirror image of the other side. In Theorem \ref{thm:lopsp_is_lsp} we prove another equivalence that is often easier to check. There are results on the fixpoints of orientation-reversing automorphisms of simple triangulations in the literature (see \cite{kang_ORautomorphism, tutte_ORautomorphism}), but as lopsp-operations may have parallel edges we could not use these directly. We did use those proofs as an inspiration for the proof of Theorem \ref{thm:lopsp_is_lsp}.

\begin{theorem}\label{thm:lopsp_is_lsp}
	A lopsp-operation can be written as an lsp-operation if and only if it has an orientation-reversing automorphism that fixes $v_0$, $v_1$, and $v_2$.
\end{theorem}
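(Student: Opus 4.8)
The plan is to prove both directions of the equivalence, where one direction is essentially immediate from the discussion preceding the theorem and the other requires a geometric argument about the structure of an orientation-reversing automorphism with three fixed vertices.

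\textbf{The easy direction.} Suppose $O$ can be written as an lsp-operation $O'$. As recalled just before the theorem (and proved in \cite{lopsp2021}), $O$ is then obtained by gluing a mirrored copy of $O'$ into the outer face of $O'$ along the cycle through $v_0,v_1,v_2$. The map that swaps the two copies while reflecting is an orientation-reversing automorphism of $O$, and it fixes the three vertices $v_0,v_1,v_2$ on the shared cycle. So I would spell this out: the reflection symmetry of the lsp-construction gives exactly the required automorphism.

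\textbf{The hard direction.} Suppose $\phi$ is an orientation-reversing automorphism of $O$ fixing $v_0,v_1,v_2$. I want to produce a cycle through $v_0,v_1,v_2$ such that $\phi$ maps each side to the other; by the characterisation stated before the theorem, this shows $O$ is an lsp-operation. The key is to analyse the fixed-point structure of $\phi$. Since $\phi$ is orientation-reversing, $\phi^2$ is orientation-preserving, and $\phi$ reverses the local rotation at every vertex. I would first argue that the set of ``fixed elements'' of $\phi$ — vertices fixed by $\phi$, edges fixed setwise (either fixed pointwise or flipped), and chambers mapped to themselves-with-reflection — forms a structure that, because $O$ is a plane triangulation, is a closed curve on the sphere: a cycle in $O$ possibly passing through edge-midpoints. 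The standard fact (the inspiration being \cite{kang_ORautomorphism,tutte_ORautomorphism}) is that an orientation-reversing involution of a sphere has fixed set a single circle; the subtlety here is (a) $\phi$ need not be an involution a priori, and (b) $O$ may have parallel edges so the ``circle'' may run along an edge or pass through a vertex in ways the simple-triangulation arguments don't cover. I would handle (a) by noting that an orientation-reversing automorphism of a plane map, being a reflection of the sphere up to isotopy, has order-2 behaviour on the relevant fixed set, or by passing to $\phi$ directly and tracking the mirror curve through chambers; I would handle (b) by carefully doing the local analysis chamber-by-chamber, using that each chamber has one vertex and one edge of each colour, so a colour-preserving (which $\phi$ must be, since it is an automorphism of the 3-coloured map and fixes one vertex of each colour-class representative) reflection through a chamber enters and leaves through well-defined points. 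Once the mirror curve is identified as a cycle $C$ in $O$, I would check that $C$ passes through $v_0,v_1,v_2$ — this follows because these vertices are fixed and a fixed vertex of an orientation-reversing map must lie on the mirror curve — and that $\phi$ exchanges the two open discs bounded by $C$, mapping the submap on one side isomorphically to the mirror of the submap on the other. That is exactly the condition for $O$ to arise from an lsp-operation.

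\textbf{Main obstacle.} The crux is establishing that the fixed/mirror set of $\phi$ is a single cycle through $v_0,v_1,v_2$ that genuinely splits $O$ into two mirror-image halves, in the presence of parallel edges and without assuming $\phi$ is an involution. I expect most of the work to go into the local combinatorial analysis at vertices and edges on the mirror curve — showing the curve cannot branch, cannot be disconnected, and cannot ``get stuck'' — and into verifying that the induced map on each side, together with the identification along $C$, satisfies the degree conditions (1)–(3) of Definition~\ref{def:lsp}; these last conditions are inherited from the corresponding conditions on $O$ in Definition~\ref{def:lopsp} once one checks that interior vertices of a half stay interior and boundary vertices other than $v_0,v_1,v_2$ on $C$ have their degree halved in the expected way (a colour-1 vertex of degree 4 on the mirror curve becomes degree 3 on the boundary, and $v_1$ of degree 2 stays degree 2).
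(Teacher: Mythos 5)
Your plan coincides with the paper's proof in outline: the forward direction is handled the same way, and for the converse both you and the paper aim to show that the mirror set of $\phi$ is a single cycle through $v_0$, $v_1$, $v_2$ whose two sides are exchanged by $\phi$. The problem is that your write-up stops exactly where the proof lives: everything listed under ``main obstacle'' is announced as something you \emph{would} argue, and those items are the entire content of the hard direction. Concretely, three arguments are missing. (i) Non-branching of the mirror curve. The paper gets this by passing to the barycentric subdivision $B_O$ (which is simple because $O$ is $2$-connected and loopless) and exploiting the reversed rotation at a fixed vertex $x$: if the $n+1$ incident edges (an even number, with alternating colours) satisfy $\phi(e_i)=e_{k-i}$ with $k$ even, then exactly the two antipodal edges $e_{k/2}$ and $e_{(n+k+1)/2}$ are fixed. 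Hence the fixed subgraph is a disjoint union of cycles; moreover $\phi^2$ is orientation-preserving and fixes a vertex together with an incident directed edge, so $\phi^2=\mathrm{id}$. This disposes of your worry (a) with no appeal to the topological classification of periodic sphere homeomorphisms --- an appeal which, as you yourself note, would be premature since you invoke the fixed-set-of-an-involution fact before knowing $\phi$ is an involution. (ii) Uniqueness of the cycle and the fact that it contains all three marked vertices. The paper takes the fixed cycle $c$ through $v_0$ and shows, by a path-plus-Jordan-curve argument, that every vertex of $B_O$ not on $c$ is sent to the opposite side of $c$ and hence is not fixed; nothing in your proposal substitutes for this step. (iii) The cycle must avoid the interiors of faces, i.e.\ contain no colour-$2$ vertex of $B_O$: a fixed chamber-vertex would have its two antipodal fixed edges of different colours, forcing $\phi$ to fix one vertex of that chamber and swap the other two, contradicting colour-preservation. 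You gesture at a ``chamber-by-chamber'' analysis but give no argument of this kind.

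Your easy direction is fine, and your closing paragraph on verifying the degree conditions of Definition~\ref{def:lsp} for the two halves matches what the paper leaves implicit. So the strategy is the right one and you have correctly located the difficulties, but as written the proposal identifies the lemmas rather than proving them; in particular item (i), the count of fixed edges in the reversed rotation, is the one place where a specific combinatorial idea is indispensable and it does not appear in your text.
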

\begin{proof}
	Assume that a lopsp-operation can be written as an lsp-operation $O$. The lopsp-operation can be obtained from the lsp-operation by gluing a mirror copy of $O$ into the outer face of $O$. There is clearly an orientation-reversing automorphism that fixes the boundary of the outer face of the lsp-operation and switches the sides of that cycle.
	
	Conversely, assume that a lopsp-operation $O$ has an orientation-reversing automorphism $\phi$ that fixes $v_0$, $v_1$, and $v_2$. This automorphism can be extended in a unique way to an automorphism of the barycentric subdivision $B_O$ of $O$. It follows from the fact that $O$ is 2-connected and has no loops that $B_O$ is a simple graph. Let $H$ be the submap of $B_O$ consisting of all vertices and edges of $B_O$ that are fixed by $\phi$. 
	
	Let $x$ be any vertex in $H$. Its degree in $O$ is even, and the colours of its neighbours alternate. Let $e$ be any edge incident with $x$. As $x$ is a fixpoint of $\phi$, the edge $\phi(e)$ is also incident with $x$. Let $e = e_0, e_1,\ldots, e_{k-1}, \phi(e) = e_k, e_{k+1},\ldots, e_n$ be the cyclic order of edges around $x$. As the colours of the edges incident with a vertex alternate, and the colours of $e$ and $\phi(e)$ are the same, $k$ is even. As $\phi$ is orientation-reversing, $e_{\frac{k}{2}}$ and $e_{\frac{n+k+1}{2}}$ and no other incident edges of $x$ are fixed by $\phi$. It follows that every vertex in $H$ has degree 2, so $H$ consists of a number of disjoint cycles. The two incident edges that are fixed are always exactly opposite each other in the rotational order.
	
	Let $c$ be the cycle in $H$ that contains $v_0$, and let $x$ be a vertex of $B_O$ that is not in this cycle. The graph $B_O$ is connected, so there is a path $P$ without vertices in $c$ from $x$ to a vertex $y$ that is adjacent to a vertex in $c$. As $\phi$ is orientation reversing, $\phi(y)$ is on the other side of $c$ than $y$ and $x$. The path $\phi(P)$ goes from the vertex $\phi(x)$ to the vertex $\phi(y)$ and does not contain vertices in $c$, so by the Jordan curve theorem $\phi(x)$ is on the other side of $c$ than $x$. A consequence of this is that the vertices of $c$ are the only vertices of $B_O$ that are fixed by $\phi$, and therefore $v_0$, $v_1$, and $v_2$ are in $c$.
	
	Assume that there is a vertex of colour 2 (in $B_O$) in $c$. This vertex corresponds to a chamber, so it has degree 6 in $B_O$. The two edges of $c$ incident to it must have different colours, as they are opposite each other. That implies that $\phi$ fixes one vertex of the chamber, and switches the other two. This is impossible as $\phi$ maps vertices of $O$ to vertices of the same colour. It follows that as vertices of $B_O$, there are no vertices of colour 2 in $c$. 
	This implies that $c$ induces a cycle $c_O$ in $O$. The vertices in this cycle, as vertices of $O$, can have colour 2. It follows from our previous observations that this cycle together with the interior of one of its sides is an lsp-operation that represents the same operation as $O$.
\end{proof}

\subsection{Computational results}

\begin{table}
	\centering
	\begin{tabular}{|c|rrrrrr|}
		\hline
		inflation&  \multicolumn{2}{|c|}{all} &  \multicolumn{2}{|c|}{$\textbf{c}2$} & \multicolumn{2}{|c|}{$\textbf{c}3$} \\ 
		\cline{2-7}
		factor & \multicolumn{1}{c}{all} & \multicolumn{1}{c|}{lsp} &  \multicolumn{1}{c}{all} & \multicolumn{1}{c|}{lsp} &  \multicolumn{1}{c}{all} & \multicolumn{1}{c|}{lsp} \\ 
		\hline 
		1&  2& 2& 2& 2& 2& 2 \\
		2&  6& 6& 6& 6& 2& 2 \\
		3&  12& 12& 8& 8& 4& 4 \\
		4& 54 &54&30&30&6	&6\\
		5&86&64&38&34&8&4\\
		6&	466&392&154&140&20&20\\	
		7&730&380&194&148&30&20\\
		8&	4 182&2 694&810&630&62&54\\
		9&6 828&2 316&1 034&638&102&64\\
		10&39 624&18 012&4 386&2 766&198&144\\
		11&68 402&14 332&5 732&2 728&318&132\\
		12&	 395 976		    &118 356		&24 528		&11 928		&664		&404\\
		13	    &718 116		    &89 040		&32 908		&11 552		&1 122		&396\\
		14	    &4 137 048	    &768 312		&141 642		&50 724		&2 272		&1 112\\
		15	    &7 782 846	    &554 124		&194 800		&48 512		&3 982		&1 100\\
		16	    &44 691 306	    &4 942 542	&842 118		&213 294		&7 914		&2 958\\
		17	    &86 164 178	    &3 448 216	&1 182 654	&202 158		&14 254		&2 768\\
		18	    &494 280 434	    &31 573 408	&5 131 120	&888 590		&28 296		&7 972\\
		19	    &967 710 288	    &21 445 656	&7 325 268	&836 736		&52 400		&7 560\\
		20	    &5 555 113 656	    &200 570 268	&31 881 474	&3 672 918	&103 028		&21 300\\
		21	    &10 976 285 688	&133 257 288	&46 073 266	&3 442 202	&194 794		&20 076\\
		22	    &63 116 746 584	&1 268 330 664	&201 076 524	&15 079 608	&380 864		&56 296\\
		23	    &125 378 761 834	&827 198 660	&293 139 672	&14 083 824	&731 622		&52 380\\
		24	    &		        &		    &1 282 452 080	&61 548 760	&1 426 556	&148 956	\\
		25	    &		        &		    &		    &		    &2 773 348	&138 384	\\
		\hline
	\end{tabular} 
	\caption{This table shows the number of lopsp-operations that our program counted for inflation factor up to 25. Note that operations that are $\textbf{c}3$ are also $\textbf{c}2$, so they are also included in those counts.}
	\label{tab:lopsp_counts}
\end{table}

Table~\ref{tab:lopsp_counts} contains the numbers of lopsp-operations with different properties we have obtained with the new program, which can be found at \url{https://github.com/hvdncamp/lopsp_tools}. It is interesting to see that there are relatively few $\textbf{c}3$ operations. For inflation factors 1 and 2 all lopsp-operations are $\textbf{c}3$, but the higher the inflation factor, the fewer lopsp-operations are $\textbf{c}3$. For inflation factor 20 only 0.0018\% of all lopsp-operations is $\textbf{c}3$. For this reason some optimisations have been added to the program that are used when only $\textbf{c}2$ or $\textbf{c}3$ operations are required. With these optimisations we were able to obtain counts of $\textbf{c}2$ and $\textbf{c}3$ operations for higher inflation factors.

An important optimisation that has been added is that if an intermediary quadrangulation in the generation process has 4 vertices of degree 1, then it is not extended further. This is because the generation process ensures that once a quadrangulation has 3 vertices of degree 1, its number of vertices of degree 1 cannot decrease by applying canonical extensions. The predecoration of a $\textbf{c}2$ operation has at most 3 vertices of degree 1, so quadrangulations with more than three vertices of degree 1 cannot lead to valid $\textbf{c}2$ or $\textbf{c}3$ operations and therefore they are not extended further.

Once a quadrangulation is generated, we first do some quick checks to exclude quadrangulations that clearly cannot be the predecoration of $\textbf{c}2$ or $\textbf{c}3$ lopsp-operations. For example, if a quadrangulation has four edges between the same two vertices, it is rejected. In that case there are four 2-cycles whose interiors are disjoint, so by Theorem~\ref{thm:2conn_char} that quadrangulation is not the predecoration of any $\textbf{c}2$ lopsp-operation. In this way we can drastically reduce the number of times we have to check if a lopsp-operation is $\textbf{c}2$ or $\textbf{c}3$, which is an expensive check. For example, we let the program generate all $\textbf{c}3$ lopsp-operations of inflation factor 20. Because of the pruning during the generation of quadrangulations, only 598 628 of the 2 152 298 quadrangulations are generated. Then after filtering out the obvious cases, only 17 940 quadrangulations are left that are candidates to be predecorations. Out of these 17 940 quadrangulations, 9 867 are predecorations of $\textbf{c}3$ lopsp-operations.

Note that the difference between the number of lopsp-operations of inflation factor $k$ and $k-1$ is larger when $k$ is even than when $k$ is odd. This is most obvious for lsp-operations. The number of those operations often decreases when the inflation factor is increased by one. Recall that by Lemma~\ref{lem:predeco_size}, lopsp-operations of inflation factors $k$ and $k+1$ are generated from the same set of quadrangulations if $k$ is odd. As the choice of $v_1$ is much more restricted if the inflation factor is odd, there are much fewer operations of inflation factor $k$ than of $k+1$, if $k$ is odd.

\begin{remark}
	With 
	\begin{alignat*}{3}
		&n^{tot} &&=  \text{ Number of lopsp-operations}\\
		&n^{op} &&=  \text{ Number of lopsp-operations up to orientation-preserving automorphisms}\\
		&n^{chir} &&=  \text{ Number of lopsp-operations without orientation-reversing automorphisms}\\
		&n^{lsp} &&=  \text{ Number of lopsp-operations that can be written as lsp-operations}
	\end{alignat*}

	we have the following identity, which also hold when restricted to lopsp-operations of a certain inflation factor or connectivity:
	
	\[n^{tot} = n^{lsp} + n^{chir}.\]
	
	It follows that 
	
	\[n^{op}= n^{lsp} + 2\cdot n^{chir} = 2\cdot n^{tot} - n^{lsp}.\]
	
	This proves that $n^{op}$ can be calculated from $n^{tot}$ and $n^{lsp}$. Therefore we do not list it in Table~\ref{tab:lopsp_counts}.
\end{remark}

As these are the first published counts of lopsp-operations, it was not possible to compare them to existing counts. We did however compare the counts of $\textbf{c}3$ lsp-operations to those from \cite{lspcounts}. The general and $\textbf{c}2$ lsp-counts from that paper cannot be compared, as they use a slightly different definition of lsp-operations. We also wrote a program that generates all lopsp-operations by generating all triangulations with plantri (\cite{plantri_program}), filtering them, and choosing $v_0$, $v_1$, and $v_2$ in all possible ways. We checked that these programs give the same results up to inflation factor 12. To check that our program determines whether an operation is $\textbf{c}2$ or $\textbf{c}3$ correctly, we wrote a slower but simpler program that determines whether an operation is $\textbf{c}2$ or $\textbf{c}3$ and we compared the results up to inflation factor 15.

\section{Future work}

For the generation of $\textbf{c}2$ and $\textbf{c}3$ operations, our program is not optimal. Filters during the generation process improve the performance in these cases, but too many quadrangulations are still generated and it needs to be checked whether operations are $\textbf{c}3$ or not. Developing a faster program for $\textbf{c}2$ or $\textbf{c}3$ operations should be possible, but it would require a different approach.

An interesting application of this program could be the generation of symmetric maps. As lopsp-operations preserve symmetry, maps with certain symmetry goups can be generated by applying lopsp-operations to a base map with that symmetry group. To find out whether our program can be used to generate all maps with a certain symmetry group, two things must be proved. The first is the existence of a map with that symmetry group such that every other map with the same symmetry can be generated by applying a lopsp-operation to the base map. The second is that one map cannot be obtained by applying different lopsp-operations to a base map. For example: The tetrahedron can be obtained by either applying the identity or the dual to the tetrahedron. The problem here is that the tetrahedron is self-dual. Could something similar happen for other symmetry groups or can it only happen for self-dual base maps? 

\nocite{*}
\bibliographystyle{abbrvnat}
\bibliography{sources}
\label{sec:biblio}

\end{document}